\numberwithin{equation}{section}
\newtheorem{maintheorem}{Theorem}
\newtheorem{maincorollary}[maintheorem]{Corollary}
\newtheorem{theorem}{Theorem}[section]
\newtheorem{proposition}[theorem]{Proposition}
\newtheorem{lemma}[theorem]{Lemma}
\newtheorem{definition}[theorem]{Definition}
\newtheorem{example}{Example}
\newtheorem{remark}[theorem]{Remark}
\newtheorem{problem}{Problem}
\title{Generating positive geometric entropy from recurrent leaves}
\author{Gabriel Ponce}
\address{Departamento de Matem\'atica,
  IMECC-UNICAMP Campinas-SP, Brazil.}
  \email{gaponce@ime.unicamp.br}
\thanks{The author had the financial support of FAPESP process \# 2016/05384-0 and FAEPEX process \#2334/16}
\date{}                                         
\begin{document}
\maketitle

\begin{abstract}

In this paper we introduce a new $C^r-$perturbation procedure, with respect to the $C^r$-Epstein topology, for $C^r$-foliations by surfaces. Using this perturbation procedure we show how one can use the existence of recurrent leaves of certain $C^r-$foliation $\mathcal F$ to obtain a foliation $\mathcal G$, $C^r-$close to $\mathcal F$ in the $C^r-$Epstein topology, which has a resilient leaf. In particular, one can take advantage of recurrence property to construct examples of $C^r-$foliations by surfaces, $C^r-$close to each other and such that one of them has a resilient leaf while the other is Riemannian (therefore has trivial dynamics).
\end{abstract}

\section{Introduction}

Inspired by Smale's work \cite{Smale} on the dynamics of $C^r$ diffeomorphisms on closed Riemannian manifolds, the study of the dynamics of foliations aims to understand the behavior of a foliation in terms of its dynamical characteristics such as the existence of minimal sets and orbit growth of leaves for example. As in the case of diffeomorphisms concepts such as transitivity, hyperbolicity and entropy are in the core of the theory though each of these concepts are treated in a completely different way when working with foliations.

Topological entropy of a map or flow is a dynamical invariant which captures the exponential complexity of the given dynamical system.The analogous notion of topological entropy for the foliation theory context was introduced by Ghys-Langevin-Walczak \cite{GLW} in a similar way to the definition of topological entropy of maps given by Bowen \cite{Bowen71}. Though the definition resembles the definition of topological entropy for diffeomorphisms, dealing with foliations is much more complicate since we must deal with a pseudogroup of transformations (the holonomy pseudogroup) instead of iterates of a unique map. Nevertheless, the concept of geometric entropy has provided very interesting results which helps us to obtain dynamical informations of the foliation such as the existence or not of probability invariant transverse measures for foliations with positive entropy, existence of resilient leaves etc. For diffeomorphisms there are several results relating transitivity of the map to the existence of maps with positive entropy arbitrarily $C^1-$close to it. Such results often uses $C^1-$perturbation methods to construct maps with homoclinic intersections obtaining, as a consequence of \cite{BurnsWeiss}, a map with positive entropy $C^1-$close to the original system . The solution of Palis's conjecture by S. Crovisier \cite{Crovisier} is an outstanding result which shows that every diffeomorphism which is not in the closure of the set of Morse-Smale diffeomorphisms can be approximated by diffeomorphisms with homoclinic intersections and, consequently, the set of $C^1-$diffeomorphisms which have stably zero topological entropy is exactly the closure of the set of Morse-Smale diffeomorphisms. In the light of the results for the discrete case it is natural to ask wether we can take advantage of recurrence properties of foliations to approximate certain foliations by foliations with positive geometric entropy.

The main obstacle to obtain $C^r-$perturbation results for diffeomorphisms is the required control on the derivatives. For foliations we will see that the main obstacle is not intrinsically related to the $C^r-$Epstein topology only but to the topology of the leaves itself. While on the diffeomorphisms case we can choose an open ball around a point and perturb the function inside this ball, for the case of foliations this procedure is not so easy for after the perturbation of the leaves restricted to a local chart there is no clear way to glue this perturbed part with the non-perturbed one preserving the regularity. Here is where the concept of splitting charts plays a central role. Roughly speaking we say that a foliation admits a family splitting charts (Definition ~\ref{definition:jointly.separating.charts}) if one can choose a family of local charts which cuts all the leaves which cross the domains of all the local charts. On splitting charts it is possible to cut the foliation, perturb it and then paste it back to the non-perturbed part. 

Our main Theorem shows how to use recurrence to produce resilient leaves which is the analogue of homoclinic intersection for foliation theory.

\begin{maintheorem}\label{theorem:main} 
Let $L$ be a recurrent leaf of a codimension$-1$ $C^r-$foliation $\mathcal F$ of a three dimensional Riemannian $C^r-$manifold $M$.  Assume that there exists a homotopically non-trivial loop $\gamma:[0,1] \rightarrow L$ on $L$ and a family of jointly splitting charts $\Phi$ such that 
\begin{itemize}
\item[i)] for any $\varepsilon>0$, there are $0\leq s_1<s_2 \leq 1$ such that $\gamma(s_1)$ and $\gamma(s_2)$ belong to distinct connected components of $O_{\varepsilon}(\overline{Z}) \setminus \overline{Z}$ and \[\gamma([s_1,s_2]) \subset O_{\varepsilon}(\overline{Z}) \setminus \overline{Z}\] where $Z:=\bigcup_{\phi \in \Phi} \operatorname{Dom}(\phi) \cap L $ and $O_{\varepsilon}(\overline{Z})$ denotes the $\varepsilon-$neighborhood of $\overline{Z}$ in $L$;
\item[ii)] \[ \operatorname{Dom}(\phi)\cap \gamma \ne \emptyset, \text{ for some } \phi \in \Phi.\]
\end{itemize}

Then, for any neighborhood $\mathcal V$ of $\mathcal F$ in $\operatorname{Fol}^r_1(M)$, endowed with the $C^r-$Epstein topology, there exists a $C^r-$foliation $\mathcal G \in \mathcal V$ which have a resilient leaf. Consequently, if $M$ is compact $\mathcal F$ can be $C^r-$approximated by foliations with positive geometric entropy.
\end{maintheorem}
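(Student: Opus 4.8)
The plan is to turn the recurrence of $L$ into a resilient leaf: using the splitting charts $\Phi$ one cuts $\mathcal F$, shears it so that a loop near $\gamma$ acquires a contracting holonomy, pastes it back in a $C^r$ way to obtain $\mathcal G\in\mathcal V$, and then checks that the original recurrence survives this localised perturbation strongly enough to provide the second, spiralling-in intersection that a resilient leaf demands. First I would fix $\phi_0\in\Phi$ and a point $p\in\operatorname{Dom}(\phi_0)\cap\gamma$ as in (ii), a short transversal $\tau$ to $\mathcal F$ through $p$ (identified with an interval, $p$ with $0$), and would regard $\gamma$ as a loop based at $p$ (conjugating the based loop affects none of the data used below). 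Since $L$ is recurrent it accumulates on itself, hence on $p$ along $\tau$ from at least one side; fix that side and a sequence $q_n\in L\cap\tau$ with $q_n\neq p$ and $q_n\to p$, each realised by a holonomy element $g_n$ of $\mathcal F$ --- a finite word of chart transitions --- with $g_n(p)=q_n$. Note that $\gamma$ being homotopically non-trivial is what makes a non-trivial holonomy around it possible at all: had $\gamma$ been null-homotopic its holonomy would remain trivial for every foliation.

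Next I would record the one property of the cut-perturb-paste procedure that I need. Its hypotheses are exactly (i) and (ii): (ii) provides a chart of $\Phi$ met by $\gamma$, and the behaviour of $\gamma$ inside the collar $O_\varepsilon(\overline{Z})\setminus\overline{Z}$ in (i) --- entering it and running between two of its connected components while staying inside --- supplies the region in which the modified leaves are interpolated back to $\mathcal F$, preserving $C^r$ regularity and keeping the result inside $\mathcal V$. Cutting the leaves inside $Z$ and regluing after a small transverse shear, the procedure produces, for any prescribed one-sided interval $J$ at $p$ on the recurrence side and any prescribed degree of $C^r$-smallness, a foliation $\mathcal G$ agreeing with $\mathcal F$ off a neighbourhood of $\overline{Z}$, of the prescribed $C^r$-smallness, and such that the holonomy $h:=\operatorname{hol}_{\mathcal G}(\gamma_0)$ around the loop $\gamma_0$ on the leaf $L_0$ of $\mathcal G$ through $p$ corresponding to $\gamma$ is a topological contraction towards $p$ whose basin contains $J$. (There is nothing to do if $\operatorname{hol}_{\mathcal F}(\gamma)$, or $\operatorname{hol}_{\mathcal F}(\gamma^{-1})$, already contracts towards $p$ on that side, for then $L$ is itself resilient, the $q_n$ eventually entering that basin; in the complementary case the new contraction may be taken arbitrarily weak --- hence realisable by an arbitrarily $C^r$-small shear --- while $J$ is kept fixed. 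Note also that, $h$ being non-trivial, $\gamma_0$ is automatically homotopically non-trivial on $L_0$.)

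Now I would make the choices in the right order. Fix $J$ first, any one-sided subinterval of $\tau$ at $p$ on the recurrence side of definite positive length; since $q_n\to p$, pick a return point $q_{n_0}\in J$ with $q_{n_0}\neq p$ and set $d_0:=|q_{n_0}-p|\in(0,|J|)$, which pins down the finite word $g_{n_0}$. Only now apply the procedure with basin $J$ and $C^r$-size small enough: small enough to place $\mathcal G$ in $\mathcal V$, and --- since holonomy along a fixed finite word depends $C^0$-continuously on the foliation --- small enough that the perturbed element $g_{n_0}^{\mathcal G}$ (the same word, now for $\mathcal G$) moves $p$ to a point $q':=g_{n_0}^{\mathcal G}(p)\in L_0\cap\tau$ within $\tfrac12\min(d_0,|J|-d_0)$ of $q_{n_0}$. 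Then $q'\in J\setminus\{p\}$, so $q'$ lies in the basin of $h$ and $q'\neq p$; the iterates $h^k(q')$ all lie on $L_0\cap\tau$ and converge to $p$, and therefore $L_0$ is a resilient leaf of $\mathcal G\in\mathcal V$. When $M$ is compact a codimension-$1$ $C^r$-foliation carrying a resilient leaf has positive geometric entropy (Ghys-Langevin-Walczak), so, $\mathcal V$ being arbitrary, $\mathcal F$ is $C^r$-approximated by foliations with positive geometric entropy.

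The hardest step should be the one packed into the second paragraph: producing the prescribed contracting holonomy around $\gamma$ by a genuine $C^r$-small perturbation that is the identity off a neighbourhood of $\overline{Z}$ --- precisely what the splitting-chart cut-perturb-paste construction, together with hypotheses (i)--(ii), is designed to do. The point special to the present statement, rather than to that construction, is the ordering of choices in the third paragraph --- the basin and the recurrence return point before the size of the perturbation --- which is what prevents a localised perturbation from pushing the chosen recurrence return point out of the newly created basin.
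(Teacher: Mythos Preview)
Your proposal and the paper diverge at the crucial technical step, and the divergence is a genuine gap.

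The paper never tries to make the perturbed holonomy a contraction on a prescribed basin. It works pointwise: with $f=\operatorname{hol}_{\mathcal F}(\gamma)$ and $f(a)=a$, it first builds a nested sequence $b_n\to a$ with $a_n:=f(b_n)\to a$ (using only continuity of $f$), and then designs the water-slide shear $\xi$ so that it carries the $a_n$-height to the $b_{n+1}$-height. The perturbed holonomy $g$ then satisfies $g([b_n])=[b_{n+1}]$ \emph{exactly}, giving a single explicit orbit converging to $a$; nothing is claimed about $g$ off this sequence. Since the required displacements $|a_n-b_{n+1}|$ can be made as small as one wishes by starting with $b_1$ close to $a$, the shear can be taken $C^r$-small.

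Your scheme demands more: a contraction on a \emph{fixed} one-sided interval $J$, obtained by an \emph{arbitrarily} $C^r$-small shear, with $J$ chosen \emph{before} the size of the perturbation so that the recurrence return $q_{n_0}\in J$ and the word $g_{n_0}$ are already pinned down. The sentence ``the new contraction may be taken arbitrarily weak --- hence realisable by an arbitrarily $C^r$-small shear --- while $J$ is kept fixed'' is the unproved step, and it can fail. In your complementary case (neither $f$ nor $f^{-1}$ contracting towards $p$ on that side) the fixed points of $f$ necessarily accumulate at $p$; a model is $f(x)=x+x^{r+2}\sin(1/x)$. On any fixed $J=[0,\delta]$ there are points where $f(x)-x$ has size comparable to $x^{r+2}$, and a shear $\psi$ with $\|\psi-\operatorname{id}\|_{C^r}<\epsilon$ can overcome this only where $x^{r+2}<\epsilon$, so the basin of $\psi\circ f$ shrinks to $[0,\epsilon^{1/(r+2)})$ as $\epsilon\to 0$. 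Thus the two requirements you impose --- fixed $J$ first, then arbitrarily small perturbation --- are incompatible in general, and your ordering of choices, which was the point you singled out, breaks down.

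Your handling of the recurrence side --- tracking the finite word $g_{n_0}$ through the perturbation to place $q'\in L_0\cap\tau$ near $q_{n_0}$ --- is correct and in fact more explicit than the paper on why the second intersection lies on the right leaf. To repair the argument, replace ``contraction on a basin'' by the paper's tailored construction: pick the recurrence return $b_1\in L\cap\tau$ first, build the sequence $b_n$ from $f$, and design the shear so that $g$ sends $b_n$ to $b_{n+1}$.
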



In \cite[Problem $9.11$]{HurderSurvey})  S. Hurder adressed the question of whether the positive geometric entropy is a generic property for $C^1-$foliations. Theorem \ref{theorem:main} provides a first progress in this question by giving some explicit conditions in which a $C^r-$foliation can be $C^r-$approximated by foliations with positive geometric entropy.

\begin{maincorollary} \label{coro}
There exists a smooth codimension-$1$ Riemannian foliation $\mathcal F$ on a compact manifold $M$ which can be $C^r-$approximated by $C^r-$foliations which have resilient leaves. In particular, there are smooth foliations with zero geometric entropy which can be $C^r-$approximated by foliations with positive geometric entropy.
\end{maincorollary}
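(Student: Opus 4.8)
The plan is to construct $\mathcal F$ by hand and then feed it to Theorem~\ref{theorem:main}. Take $M=\mathbb{T}^3=\mathbb{R}^3/\mathbb{Z}^3$ with the flat metric and let $\mathcal F$ be the codimension-$1$, real-analytic (hence $C^\infty$) foliation $\ker\alpha$ with $\alpha=dx-\lambda\,dy$ and $\lambda\in\mathbb{R}\setminus\mathbb{Q}$. Since $\alpha$ is a parallel $1$-form for the flat metric, that metric is bundle-like, so $\mathcal F$ is a Riemannian foliation; hence its holonomy pseudogroup is equicontinuous and its geometric entropy is zero \cite{GLW}. Every leaf is $\{x-\lambda y=c\}\times S^1_z$; because $\lambda$ is irrational the immersed line $\{x-\lambda y=c\}\subset\mathbb{T}^2_{xy}$ is injective and dense, so each leaf is a cylinder $L\cong\mathbb{R}\times S^1$ which is dense in $\mathbb{T}^3$ and, in particular, recurrent. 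Fix one such leaf $L$.

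I would next produce the data $(\gamma,\Phi)$ required by Theorem~\ref{theorem:main}. Pick a small round open disk $D\subset\mathbb{T}^2_{xy}$ meeting $L$, put $U:=D\times S^1_z\subset\mathbb{T}^3$, cover $U$ by finitely many foliation charts $\phi_1,\dots,\phi_k$ with $\bigcup_i\operatorname{Dom}(\phi_i)=U$, and set $\Phi=\{\phi_1,\dots,\phi_k\}$. For every leaf $L'$ crossing $U$, the set $L'\cap U$ is an annulus $I'\times S^1_z$, where $I'$ is a chord of $D$ sitting in the $\mathbb{R}$-factor of the cylinder $L'$; removing its closure disconnects $L'$, so $\Phi$ cuts all the leaves crossing the domains of its charts, i.e.\ it is a family of jointly splitting charts in the sense of Definition~\ref{definition:jointly.separating.charts}. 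Thus $Z=\bigcup_{\phi\in\Phi}\operatorname{Dom}(\phi)\cap L$ is a narrow annular band on $L$ separating $L$ into two half-cylinders, and $O_\varepsilon(\overline Z)\setminus\overline Z$ consists of two thin annular collars for small $\varepsilon$. For $\gamma$ I would take a homotopically non-trivial loop in $L$ (so it winds once around the $S^1_z$-factor) routed so that, on a single parameter subinterval, it runs once across $Z$ from one collar to the other while remaining inside an arbitrarily thin neighbourhood of $\overline Z$. Clause (ii) then holds because $\gamma$ meets $\operatorname{Dom}(\phi_i)$ for some $i$, and clause (i) holds for every $\varepsilon>0$ by letting $s_1<s_2$ be the times at which $\gamma$ enters and leaves $O_\varepsilon(\overline Z)$ during that crossing.

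Granting this, Theorem~\ref{theorem:main} applies: for every $C^r$-Epstein neighbourhood $\mathcal V$ of $\mathcal F$ in $\operatorname{Fol}^r_1(\mathbb{T}^3)$ there is $\mathcal G\in\mathcal V$ with a resilient leaf, and since $\mathbb{T}^3$ is compact, $\mathcal F$ is therefore $C^r$-approximated by foliations with positive geometric entropy. As $\mathcal F$ is itself smooth, codimension-$1$, Riemannian and of zero geometric entropy, this proves the Corollary.

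Finally, the routine parts (exhibiting $\mathcal F$, checking density and recurrence, checking that the flat metric is bundle-like) contrast with the two points that carry the argument: (a) verifying that $\Phi$ genuinely satisfies Definition~\ref{definition:jointly.separating.charts}, where the cylindrical topology of the leaves is essential — an annular plaque wrapping the $S^1$-factor of a leaf $\cong\mathbb{R}\times S^1$ automatically separates that leaf; and (b) arranging $\gamma$ so that \emph{both} clauses of hypothesis (i) hold for all $\varepsilon$, which amounts to letting $\gamma$ cross the band $Z$ transversally within an arbitrarily thin collar of $\overline Z$. I expect (a) to be the step requiring the most care, since it is where one must match the combinatorics of the chosen charts to the definition of jointly splitting charts; everything else is then delivered by Theorem~\ref{theorem:main}.
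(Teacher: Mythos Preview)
Your approach is essentially the paper's: exhibit an explicit Riemannian foliation built from an irrational linear flow and apply Theorem~\ref{theorem:main}. The paper's one-line proof invokes Example~\ref{example1} on $\mathbb{T}^2\times[0,1]$, while you work on $\mathbb{T}^3$; these are minor variants of the same construction, and your choice of a closed manifold is arguably cleaner.

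Two technical slips, both easily repaired. First, a \emph{round} disk $D$ does not yield a family of jointly splitting charts: the chords of $D$ parallel to the flow direction degenerate to points at the poles, so $\overline{D}\times S^1$ with its induced plaque foliation is not $C^r$-diffeomorphic to a product $\Sigma\times D^1$ as Definition~\ref{definition:jointly.separating.charts} requires. Replace $D$ by a small rectangle with one pair of sides parallel to the flow; then $\overline U\cong(\text{annulus})\times[0,1]$ as foliated spaces and the splitting curve can be taken to be the core circle of the annulus. Second, because $L$ is dense in $\mathbb{T}^3$, the set $Z=L\cap U$ is not ``a narrow annular band separating $L$ into two half-cylinders'' but a countably infinite disjoint union of such annuli, one for each passage of the irrational line through $D$. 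Your verification of hypothesis~(i) should be phrased accordingly: route $\gamma$ so that it crosses exactly one chosen component of $Z$, and the two collars you refer to are those flanking that particular component. With these corrections the argument goes through as you outline, and it matches the paper's proof in both strategy and level of detail.
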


Theorem ~\ref{theorem:main} is an application of a subtle $C^r-$perturbation procedure which we introduce in Section ~\ref{section:cutandglue}. The $C^r-$perturbation procedure is interesting in its own right and, due to its applicability, is actually more important then the conclusion of Theorem ~\ref{theorem:main} itself. Further applications of this $C^r-$perturbation procedure to the study of foliation dynamics will appear in a fortiori work of the author.

\subsection{Structure of the paper}
In Section ~\ref{sec:preliminaries} we recall some concepts of foliations theory, which are necessary to the development of our results, such as the $C^r-$Epstein topology, orbit of a point, recurrent leaves and resilient leaves. In section ~\ref{sec:separating} we introduce the concept of family of splitting charts, water slide functions and present some examples. In Section ~\ref{section:cutandglue} we construct a $C^r-$perturbation technic using local perturbations (by water slide functions) on a chart belonging to a family of splitting charts. Finally in Section ~\ref{sec:creatingresilient} we use the $C^r-$perturbation procedure of Section ~\ref{section:cutandglue} to prove Theorem ~\ref{theorem:main} and Corollary ~\ref{coro}

\section{Basic definitions on foliation dynamics} \label{sec:preliminaries}
In this section we recall some definitions of foliation theory which are strictly necessary to prove our results such as the $C^r$-Epstein topology, recurrence and resilence. For an introductory approach to foliations theory we refer the reader to \cite{CandelConlonI, Camacho} and to a more general overview of foliation dynamics we refer the reader to \cite{Hurder2009}.

Let $M$ be an $m$-manifold possibly with boundary and $\mathcal F$ be a $C^r$-foliation on $M$ (see \cite[Definition $1.1.18$] {CandelConlonI}). It is well known that the $C^r$ foliated atlas $\mathcal A = \{U_{\alpha}, \varphi_{\alpha}\}_{\alpha \in \mathfrak A}$ of foliated charts defining $\mathcal F$ can be taken to be regular (see \cite[Definition $1.2.11$]{CandelConlonI}) and such that $\overline{\varphi_{\alpha}(U_{\alpha})} = [0,1]^m \subset \mathbb R^m$. Therefore along the paper we will always take foliated atlas to be of this type.
Given a regular foliated atlas $\mathcal A$ we will denote by $\Gamma_{\mathcal A}$ the holonomy pseudogroup associated to $\mathcal A$ (see \cite[Section $2.2$]{CandelConlonI}). 
Let $L$ be a leaf of a $C^r$ foliation $\mathcal F$ defined by a regular foliated atlas $\mathcal A$ and denote by $\mathcal T$ a complete transversal of $\mathcal F$ (see \cite[Section $2.2$]{CandelConlonI}). Given a curve $\gamma:[0,1] \rightarrow L$ let $\mathcal P = \{P_0,\ldots,P_k\}$ be a plaque chain (i.e, each $P_i$ is a plaque in a chart of $\mathcal A$ and $P_i \cap P_{i+1}\ne \emptyset$, $0\leq i \leq k-1$) which covers $\gamma$ in the following sense: there exists a subdivision
\[0=t_0 < t_1 < \ldots < t_k=1\]
such that $\gamma([t_i,t_{i+1}]) \subset P_i$ for $0\leq i \leq k-1$. The holonomy $h_{\mathcal P} : D \rightarrow \mathcal T$ defined on an open set $D \subset \mathcal T$ will also be referred to as the holonomy along the curve $\gamma$ and will be denoted, further in this paper, by $h_{\gamma}$.

\subsection{The $C^r$-Epstein topology}
We denote by $\operatorname{Fol}^q_r(M)$ the space of codimension-$q$ $C^r$-foliations on a manifold $M$. In the space $\operatorname{Fol}^q_r(M)$, Epstein \cite{Epstein} defined a natural topology usually referred to as the $C^r$-Epstein topology. The definition of the $C^r$-Epstein topology takes into account two axioms which should be satisfied in order to say that the topology is ``good enough'' or ``natural''. We will not recall the definition the $C^r$-Epstein topology here but we will make use the following proposition from Epstein's paper which establishes a way to say when two foliations are $C^r$-close to each other on the $C^r$-Epstein topology by comparing the holonomy maps on a locally finite family of compact sets.
 
\begin{definition}
A subset $P\subset \mathbb R^m$ is called a rectangular neighborhood if it is a product of (open, closed or semi-open) intervals.
\end{definition}

Let $\mathcal G$ be a codimension-$q$ $C^r$-foliation of $M$ given by a $C^r$ foliated atlas $\mathcal C=\{\eta_i: V_i \rightarrow \mathbb R^m\}_{i\in I}$. The maps $g_i:U_i \rightarrow \mathbb R^q$ defined by 
\[g_i:= \rho_2 \circ \eta_i, \quad i\in I ,\]
where $\rho_2:\mathbb R^{m-q} \times \mathbb R^q \rightarrow \mathbb R^{m-q} \times \mathbb R^q $ denotes the projection on the last $q$ coordinates, are called transversal projections of $\mathcal G$ (with respect to $\mathcal C$).
 
\begin{proposition}[Proposition $3.8$ of \cite{Epstein}] \label{proposition:epstein}
Let $\mathcal F$ be a codimension-$q$ $C^r$-foliation of a Riemannian manifold $M$. Let $I$ be an indexing set and let $\mathcal A=\{\phi_i:U_i \rightarrow \mathbb R^n\}_{i\in I}$ be a $C^r$ foliated atlas for $\mathcal F$. For each $i\in I$, let $K_i \subset U_i$ be a compact set and suppose that:
\begin{itemize}
\item[1)] for each $i$ the set $\phi_i(K_i)$ is a rectangular neighborhood on $\mathbb R^n$;
\item[2)] $\{K_i\}$ is a locally finite family of sets;
\item[3)] $M = \bigcup_{i\in I}\operatorname{Int}(K_i)$.
\end{itemize}
Then we obtain a neighborhood of $\mathcal F$ in the $C^r$-Epstein topology by specifying a family $\{\delta_i\}_{i\in I}$ of positive numbers in the following way: we take all $C^r$-foliations $\mathcal G$ of $M$ which have transversal projections $g_i: \operatorname{Int}(K_i) \rightarrow \mathbb R^q$, such that for $|\alpha|\leq r$,
\[|D^{(\alpha)}(g_i\circ \phi_i^{-1}-\rho_2)| < \delta_i, \quad \text{ on } \quad  \phi_i(\operatorname{Int}(K_i)), \]
where $\alpha$ denotes the multi-indexes $\alpha=(\alpha_1,...,\alpha_n)$ with $\alpha_i\geq 0$ for all $1\leq i \leq n$ with $|\alpha|:= \alpha_1+...+\alpha_n$.
\end{proposition}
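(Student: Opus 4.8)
The plan is to recognise the set $\mathcal{N}$ singled out by the data $(\{U_i,\phi_i\},\{K_i\},\{\delta_i\})$ as an intersection of subbasic open sets for the $C^r$-Epstein topology, and then to dispatch the two points that are not immediate: that this intersection is infinite, and that $\mathcal{N}$ honestly deserves to be called a \emph{neighbourhood} of $\mathcal{F}$. Recall from \cite{Epstein} that the $C^r$-Epstein topology is generated by comparison sets of exactly the shape occurring in the statement: to a foliated chart $(U,\phi)$ of $\mathcal{F}$, a compact $K\subset U$ with $\phi(K)$ a rectangular neighbourhood, and $\delta>0$, one attaches the set $V(U,\phi,K,\delta)$ of all $\mathcal{G}\in\operatorname{Fol}^q_r(M)$ possessing a transversal projection $g\colon\operatorname{Int}(K)\to\mathbb{R}^q$ with $|D^{(\alpha)}(g\circ\phi^{-1}-\rho_2)|<\delta$ on $\phi(\operatorname{Int}(K))$ for all $|\alpha|\le r$. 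In Epstein's construction these sets are open, and the resulting topology is \emph{local}: a subset of $\operatorname{Fol}^q_r(M)$ is open as soon as it coincides, in a neighbourhood of every point of $M$, with a finite intersection of such $V$'s. With this vocabulary, $\mathcal{N}=\bigcap_{i\in I}V(U_i,\phi_i,K_i,\delta_i)$, and $\mathcal{F}\in\mathcal{N}$ because $\rho_2\circ\phi_i$ is a transversal projection of $\mathcal{F}$ on $\operatorname{Int}(K_i)$ realising every bound as an equality to zero.

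Next I would verify that hypotheses (1)--(3) are precisely what makes this identification legitimate. Hypothesis (1) ensures that each $\operatorname{Int}(K_i)$ is again the domain of a genuine foliated subchart of $\mathcal{F}$ with rectangular image, so that ``transversal projection on $\operatorname{Int}(K_i)$'' is meaningful, the $C^r$-quantities in the statement are evaluated on a product of intervals, and the restriction to $\operatorname{Int}(K_i)$ of a transversal projection of a nearby foliation is again one. Hypothesis (2) is what upgrades the \emph{a priori} infinite intersection $\bigcap_i V_i$ to an open set: each point of $M$ has a small ball meeting only finitely many $K_i$, on which $\mathcal{N}$ agrees with a finite intersection of the $V_i$, and locality of the Epstein topology then yields openness. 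Hypothesis (3) is exactly what is needed for $\mathcal{N}$ to be a neighbourhood of $\mathcal{F}$: if $\bigcup_i\operatorname{Int}(K_i)$ missed some open set $W$, a foliation agreeing with $\mathcal{F}$ off $W$ but arbitrary on $W$ would satisfy every $\delta_i$-condition, so $\mathcal{N}$ would contain no Epstein-open set around $\mathcal{F}$.

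The step carrying the real content, and the one I expect to be the main obstacle, is the locality statement itself, i.e.\ showing that $\mathcal{N}$ is open at an arbitrary $\mathcal{G}\in\mathcal{N}$ and not merely at $\mathcal{F}$. Given the transversal projections $g_i$ of $\mathcal{G}$ on $\operatorname{Int}(K_i)$, one shrinks each $K_i$ to a compact $K_i'\subset\operatorname{Int}(K_i)$ with $\{K_i'\}$ still locally finite and $M=\bigcup_i\operatorname{Int}(K_i')$, and observes that on the compactum $\phi_i(K_i')$ the change of plaque structure from the $\phi_i$-chart to the $g_i$-chart has uniformly bounded derivatives up to order $r$. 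A higher-order chain rule then supplies $\delta_i'>0$, depending on those bounds and on the slack $\delta_i-\sup_{|\alpha|\le r}|D^{(\alpha)}(g_i\circ\phi_i^{-1}-\rho_2)|$, such that any foliation which is $\delta_i'$-$C^r$-close to $\mathcal{G}$ in the $g_i$-charts over $\operatorname{Int}(K_i')$ is automatically $\delta_i$-$C^r$-close to $\mathcal{F}$ in the $\phi_i$-charts there; the intersection over $i$ of these smaller comparison sets around $\mathcal{G}$ is then an Epstein-neighbourhood of $\mathcal{G}$ contained in $\mathcal{N}$. The two genuinely delicate bookkeeping points are the uniform control of the change-of-foliated-chart $C^r$-norms on the $K_i'$, and performing the simultaneous shrinking of the $K_i$ to the $K_i'$ while preserving local finiteness and the covering property (3); both are routine but must be done with care.
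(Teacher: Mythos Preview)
The paper does not contain a proof of this proposition: it is quoted verbatim as Proposition~3.8 of Epstein's paper \cite{Epstein} and used as a black box, so there is no ``paper's own proof'' to compare your proposal against. Your sketch is a faithful outline of the standard argument one finds in Epstein's original paper, with the correct identification of the roles of hypotheses (1)--(3) and the correct isolation of the locality/openness-at-$\mathcal{G}$ step as the substantive point; if you want feedback on the details you should consult \cite{Epstein} directly rather than the present paper.
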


From now on we will always consider the space $\operatorname{Fol}^q_r(M)$ to be endowed with the $C^r$-Epstein topology.

\subsection{Orbits, recurrence and resilient leaves}

To study the dynamics of a function is to study the behavior of orbits of points under the action of the given function. For foliations the definition of orbit of a point is given by letting the group of holonomies to act over the point. The concepts of $\omega$-limit sets and recurrence for discrete dynamical systems or flows have analogues on the theory of foliations and, as for the discrete case, they play an important role in understanding of the dynamical behavior of a foliation.

\begin{definition}
Let $\mathcal F$ be a $C^r$-foliation on a Riemannian manifold $M$ and denote by $\mathcal T$ a complete transversal. For $x\in \mathcal T$
\begin{itemize}
\item[1)] the orbit of $x$ is the set
\[\mathcal  O(x) = \mathcal F(x) \cap \mathcal T.\]
\item[2)] The $\omega$-limit set of a point $x\in \mathcal T$ is the relatively compact saturated subset
\[\omega(x) = \bigcap_{S\subset \mathcal O(x) , \#S <\infty} \overline{\mathcal O(x) - S} \subset \mathcal T. \]
\item[3)] The orbit $\mathcal O(x)$ is said to be recurrent if $\omega(x) \cap \mathcal O(x)  \ne \emptyset$. In this case we also say that $\mathcal F(x)$ is a recurrent leaf and that $x$ is a recurrent point.
\end{itemize}
\end{definition}
%

A leaf $L$ of a codimension-one foliation $\mathcal F$ is said to be resilient if it captures itself by an holonomy contraction. More precisely, a leaf $L$ of a codimension-one foliation $\mathcal F$ is said to be resilient if there exists a holonomy map $f$ defined in a neighborhood $U\subset T$ of a point $x\in L\cap T$ and another point $y\in L\cap U$ such that $y\ne x$,
\[f(x) = x, \quad \text{and} \quad f^n(y)\rightarrow x.\]

Entropy is one of the most important concepts in dynamical systems and has many variations such as metric entropy and topological entropy for example. These two, metric and topological entropy, has been very effective in the classification of chaotic behaviors of diffeomorphisms and flows, as well as in problems of determining the typical behavior of certain classes of dynamical systems. With the idea of creating a similar dynamical invariant for pseudogroups $C^1-$actions E. Ghys, R. Langevin and P. Walczak \cite{GLW} introduced the concept of geometric entropy for pseudogroup $C^1-$actions. Geometric entropy has been a very important dynamical invariant in the theory of foliation dynamics. This invariant captures, in some sense, the exponential complexity of the holonomies taking into account the transverse asymptotic behavior and the asymptotic growth of the leaf altogether. In what follows we do not give the explicit definition of geometric entropy since it is not needed along the paper but instead we need the following result relating the existence of resilient leaves with the positivity of the entropy.

\begin{theorem}[\cite{Hurder2010}] \label{Hurder}
If $\mathcal F$ is a codimension$-1$ $C^1-$foliation of a compact connected Riemannian manifold $(M,g)$, then $h_g(\mathcal F) > 0$ if, and only if, $\mathcal F$ has a resilient leaf, where $h_g(\mathcal F)$ denotes the geometric entropy of $\mathcal F$.
\end{theorem}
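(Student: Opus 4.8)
The plan is to prove the two implications of the equivalence separately. Recall that $h_g(\mathcal F)$ is the exponential growth rate, as $n\to\infty$, of the maximal cardinality of an $(n,\varepsilon)$-separated subset of a complete transversal $\mathcal T$ under the holonomy pseudogroup $\Gamma$ equipped with a fixed finite symmetric generating set and the associated word-length; thus $h_g(\mathcal F)>0$ if and only if, for some choice of generators and some $\varepsilon>0$, this maximal cardinality grows at least exponentially in $n$.

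For the implication ``resilient leaf $\Rightarrow$ $h_g(\mathcal F)>0$'', I would extract from the resilience data a ping--pong (Markov) sub-pseudogroup. Starting from $f(x)=x$, $g(x)=y\neq x$ and $f^n(y)\to x$, after replacing $f$ by a large power and shrinking transverse intervals one obtains two holonomy germs $a$ and $b$ --- each a word of length at most a fixed constant $C$ in $\Gamma$ --- together with a transverse interval $I$ such that $\overline{a(I)}$ and $\overline{b(I)}$ are disjoint and contained in $I$. Then the $2^n$ words of length $n$ in $a,b$ send a fixed basepoint of $I$ to $2^n$ points that are pairwise $\delta$-separated, where $\delta=\operatorname{dist}(a(I),b(I))>0$ is independent of $n$ (two such points are already separated at the first letter where their words disagree). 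Since each of these words has $\Gamma$-length at most $Cn$, the maximal $(Cn,\delta)$-separated set in $\mathcal T$ has at least $2^n$ elements, so $h_g(\mathcal F)\ge (\log 2)/C>0$.

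For the converse, ``$h_g(\mathcal F)>0 \Rightarrow$ resilient leaf'', I would follow the Ghys--Langevin--Walczak strategy in the form adapted to the $C^1$ category. First, reinterpret positivity of the entropy as exponential growth of the transverse expansion: there are $\lambda>0$, points $p_n\in\mathcal T$ and holonomies $h_n=h_{\gamma_n}$ along leafwise geodesic segments $\gamma_n$ of length comparable to $n$ with $\|Dh_n(p_n)\|\ge e^{\lambda n}$. Next, pass to an accumulation point $p$ of $\{p_n\}$ and apply a Pliss-type selection lemma along the corresponding leaf geodesic to find a return time at which the transverse derivative is uniformly expanding over the whole returning segment and the segment closes up near $p$; this yields a holonomy germ $h$ with a fixed point $q$ near $p$ and $|h'(q)|>1$. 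Finally, $q$ lies on a leaf whose transverse orbit accumulates on $q$, so there is a holonomy $g$ with $g(q)=q'\neq q$ and $q'$ inside the interval on which the contraction $h^{-1}$ (which fixes $q$) is defined; then $h^{-k}(q')\to q$, and that leaf is resilient.

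The main obstacle is the Pliss-selection step in the $C^1$ setting: upgrading the asymptotic, infinitesimal expansion supplied by the entropy to a genuine hyperbolic fixed point of a single holonomy germ, and controlling the size of the interval on which the returning holonomy is honestly expanding so that a fixed point is forced. In the $C^2$ category one can invoke bounded-distortion estimates (Sacksteder's theorem, Denjoy-type control), but these are unavailable for $C^1$ foliations; running the Pliss argument purely at the level of the derivative cocycle along the orbit, while keeping uniform control of the neighbourhoods, is the technical heart of Hurder's theorem and the delicate point of the whole proof.
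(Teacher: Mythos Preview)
The paper does not prove this theorem. Theorem~\ref{Hurder} is quoted from \cite{Hurder2010} and is used as a black box; the only commentary in the paper is that the statement was first established for $C^2$ foliations by Ghys--Langevin--Walczak and was extended to the $C^1$ case by Hurder ``using techniques from ergodic theory and topological dynamics of flows,'' with a pointer to Walczak's book for a discussion. There is therefore nothing in the paper to compare your argument against.

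That said, a brief remark on your sketch relative to what the paper alludes to. Your forward direction (resilient $\Rightarrow$ positive entropy) via a ping--pong pair is the standard argument and is essentially the same as in \cite{GLW}. For the converse, your outline follows the $C^2$ scheme of \cite{GLW} and then proposes to replace the distortion/Sacksteder step by a Pliss-type selection on the derivative cocycle. This is morally in the right direction, but be aware that Hurder's actual $C^1$ proof is not a direct Pliss argument on a single orbit: it passes through the structure theory of the foliation geodesic flow and its invariant measures (transverse Lyapunov exponents, a Pesin-type analysis for the holonomy cocycle) to locate a hyperbolic fixed point of some holonomy germ, and then concludes resilience. The ``main obstacle'' you identify---manufacturing a genuine hyperbolic fixed point with a definite domain from purely infinitesimal expansion, without $C^2$ distortion control---is exactly where the ergodic-theoretic machinery enters, and a bare Pliss lemma on a single trajectory does not by itself give the needed uniform size of the interval on which the return map is expanding. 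So your sketch is a reasonable heuristic but, as written, the converse direction has a real gap at precisely the point you flag.
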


Theorem \ref{Hurder} was first proved for $C^2-$foliations by Ghys-Langevin-Walczak \cite{GLW} and later extended to $C^1$-foliations by S. Hurder \cite{Hurder2010} using techniques from ergodic theory and topological dynamics of flows. The reader may also find a discussion on the proof given by Hurder in \cite[Section $4.6$]{Walczak}.

\begin{remark}
Given $\mathcal F$ a $C^r-$foliation of a Riemannian manifold $(M,g)$, the geometric entropy $h_g(\mathcal F)$ depends not only on the foliation but also on the given Riemannian metric $g$. Although, if $(M,\mathcal F)$ is a compact $C^{\infty}-$foliated manifold it is true that if the geometric entropy $h_g(\mathcal F)$ vanishes and $g_1$ is another Riemannian metric on $M$ then $h_{g_1}(\mathcal F)$ also vanishes. That is, for compact $C^{\infty}-$foliated manifolds the positivity of the geometric entropy of a foliation $\mathcal F$ is independent of the Riemannian metric chosen on $M$ (see \cite[Corollary $13.3.3$]{CandelConlonI}).
\end{remark}

\section{Splitting charts} \label{sec:separating}
%
%

As remarked in the introduction, one of the main ideas of this paper is to work with foliations which have some recurrent leaves and take advantage of the recurrence property to ``cut and paste'' the leaves and produce a resilient leaf. In particular we want to perturb leaves which are not compact. In a broad setting this seems to be an unreasonable goal since there is no natural way to spread a perturbation on a small ball of the manifold to the whole foliation. Although, if we restrict our attention to foliations whose leaves can be cut open along a certain codimension$-1$ compact submanifold  (e.g, a leaf diffeomorphic to the strip $\mathbb R \times (0,1)$ or to the cylinder $\mathbb R \times S^1$ can be decomposed in two connected components by cutting it open in the direction of the second coordinate), then we can hope to do a local perturbation and spread it on the direction of the cutting. The definition of family of splitting charts meets this requirements and allows us to do $C^r-$local perturbations of certain foliations in a similar way that we do for diffeomorphisms.

In what follows, by a chain of foliated charts for a foliation $\mathcal F$ we mean a sequence $\phi_i: U_i \rightarrow [0,1]^m$, $1\leq i \leq n$, of foliated charts of $\mathcal F$ such that $U_i \cap U_{i+1}\ne \emptyset$ for every $i=0,1,...,n-1$.

\begin{definition}
Let $\Sigma$ be a $C^r$ surface, denote by $O_{\varepsilon}(X)$ the $\varepsilon-$neighborhood of a subset $X\subset \Sigma$. We say that a $C^r-$curve $\gamma:[0,1]\rightarrow \Sigma$ splits the surface $\Sigma$ if it is simple (i.e, it has no self intersection) and $\Sigma \setminus O_{\varepsilon}(\gamma([0,1]))$ is not connected, for any $\varepsilon >0$ sufficiently small. 
\end{definition}

\begin{definition} \label{definition:jointly.separating.charts}
Let $M$ be a $C^r$ manifold and $\mathcal F$ be a codimension$-q$, $C^r-$foliation of $M$ by surfaces (in particular $q=m-2$. We say that a finite family of foliated charts $\{\phi_i:U_i \rightarrow [0,1]^{2}\times [0,1]^q\}_{1\leq i \leq n}$ has the jointly splitting property, or that it is a family of jointly splitting charts, if there exists a compact surface with boundary $\Sigma$, endowed with a splitting curve $\gamma$, and a $C^r-$diffeomorphism 
\[ \mathfrak p: \overline{U_1\cup \ldots \cup U_n} \rightarrow \Sigma \times D^q,\]
where $D^q$ denotes the $q$-dimensional disk in $\mathbb R^q$, which maps the foliation of $\overline{U}$ induced by $\mathcal F$ to the foliation of $\Sigma \times D^q$ by the sets $\Sigma \times \{t\}$, $t\in D^q$, and such that for each $t\in D^q$ the curve $\mathfrak p^{-1}(\gamma \times \{t\})$ splits the leaf $L$ of $\mathcal F$ in which it is contained.

\end{definition}

\begin{remark}
Observe that for any foliation $\mathcal F$ by surfaces of a given manifold $M$ we can find a finite family $\{\phi_i:U_i \rightarrow [0,1]^{m}; 1\leq i \leq n\}$ of jointly splitting charts. Indeed, let $(\psi,U)$ be a foliated chart of a regular atlas of $\mathcal F$. Thus $\psi(U) \subset [0,1]^m$ is a rectangular neighborhood foliated by the sets $\psi(U)\cap ([0,1]^{2} \times \{c\})$, $c\in \mathbb R^{q}$, where $q=m-2$ is the codimension of $\mathcal F$. This latter foliation of $\psi(U)$ clearly admits a finite family of jointly splitting charts $\{\widetilde{\phi_i}:\widetilde{U_i} \rightarrow [0,1]^{m}\}_{1\leq i \leq n}$, $\widetilde{U_i}\subset \psi(U)$, and consequently taking the pull back by $\phi$ we obtain a family of jointly splitting charts for $\mathcal F$.
\end{remark}


\begin{example} \label{example1} 
%

Let $\phi_t$ be an irrational flow on $\mathbb T^2$. Let $M = \mathbb T^2 \times [0,1]$ and define $\mathcal F$ to be the foliation given by $\mathcal F(r,x) = \{\phi_t(x): t\in \mathbb R\} \times [0,1]$ for $0\leq r\leq 1$ and $x\in \mathbb T^2$. Observe that this foliation admits a finite family of jointly splitting charts and has zero geometric entropy for it is a Riemannian foliation (see \cite[Section $16$]{Hurder2009}) . Now, we consider $C$ the cylinder given by
\[C = [0,1]\times \left\{ (y,z) \in (0,1)^2: \left(y-\frac{1}{2}\right)^2+\left(z-\frac{1}{2}\right)^2 < \frac{1}{16} \right\}.\]
Let $\pi: [0,1]^3 \rightarrow \mathbb T^2 \times [0,1]$ be the natural projection and take $N:= M \setminus C$. In $N$ we take the foliation $\mathcal G$ given by $\mathcal G = \mathcal F \setminus C$.
The foliation $\mathcal G$ is a $C^{\infty}$ Riemannian minimal foliation such that every leaf has infinity genus.

\end{example}



\begin{example} Consider the representation $\rho : \pi_1(\mathbb T^2) \rightarrow \operatorname{Diff}_{+}^{\infty}(\mathbb S^1)$ given by $\rho(0)=Id$ and $\rho(1) = R_{\alpha}$ (here we are implicitly using the fact that $\pi_1(\mathbb T^2)  =\mathbb Z^2$) where $R_{\alpha}$ is an irrational rotation on $\mathbb S^1$. In $[0,1]^2 \times S^1$ let $\mathcal F_0$ be the foliation with leaves $\{[0,1]^2 \times \{\theta\}\}_{\theta \in S^1}$. Let $L_0 = [0,1]^2 \times \{\theta_0\}$ be an arbitrary leaf of $\mathcal F_0$ and take 
\begin{align*}
U_0^1&:= [1/3,1]\times [1/3,2/3]\times  [\theta_0-\delta, \theta_0+\delta]  \subset L_0, \\
U_0^2& :=([0,2/3] \cup [5/6,1])\times [1/3,2/3] \times [\theta_0-\delta, \theta_0+\delta] \subset L_0,\end{align*}
with $\delta$ small enough. Now, on the boundary of $[0,1]^2 \times S^1$ we define the following two identifications:
\begin{itemize}
\item[1)] $(x,y,\theta) \sim (x',y',\theta')$ if $x=0,x'=1, y'=y$ and $\theta' = \rho(0)(\theta) = \theta$;
\item[2)] $(x,y,\theta) \sim (x',y',\theta')$ if $x'=x, y=0, y'=1$ and $\theta' = \rho(1)(\theta) = R_{\alpha}(\theta)$.
\end{itemize}
Applying both identifications we obtain a smooth foliation $\mathcal F$ of $\mathbb T^3$, called the suspension of the representation $\rho$, whose leaves are cylinders (i.e, every leaf is homeomorphic to $\mathbb R \times S^1$). Denote by $U_1, U_2$ the sets obtained by applying the equivalente relations $(1)$ and $(2)$ to the sets $U_0^1$ and $U_0^2$  respectively. Then, $U_1$ and $U_2$ are domains of certain foliated charts $(\phi_1,U_1), (\phi_2,U_2)$ for $\mathcal F$ and furthermore $\overline{U_1 \cup U_2}$ is smoothly diffeomorphic to $\Sigma \times [0,1]$ where $\Sigma = \{(x,y)\in \mathbb R^2 : 1/2 \leq x^2+y^2 \leq 1\}$. Therefore, $\{(\phi_1,U_1), (\phi_2,U_2)\}$ is a family of jointly splitting charts.
\end{example}

\begin{example} Let $\mathcal F$ be the Reeb foliation on the solid tori $D^2 \times S^1$ and call $L$ the unique compact leaf of $\mathcal F$ (which is a two torus). In $L$ we can take an essential loop $\gamma$ such that the holonomy along $\gamma$ is trivial. It is easy to see that one may take a family of jointly splitting charts covering $\gamma$. \end{example}

\begin{example} Let $M$ be a three dimensional Riemannian manifold and $\mathcal F$ be a codimension-$1$ $C^r$-foliation of a compact Riemannian manifold $M$ by compact surfaces. By a classical result due to G. Hector \cite{Hector77} and to D. Epstein, K. Millet and D. Tischler \cite{EMT} the set $\mathcal L$ of leaves without holonomy is residual. Let $L$ be any leaf in $\mathcal L$ and let $\gamma$ be an essential loop on $L$. Let $\{\phi_i : U_i \rightarrow [0,1]^3\}_{1\leq i \leq k}$ be a chain of foliated charts covering $\gamma([0,1])$. Since $L$ is without holonomy, there exists an open set $U\supset \gamma([0,1])$, $U \subset U_1 \cup \ldots \cup U_k$, such that $\overline{U}$ is $C^r-$diffeomorphic to a product of the form $\Sigma \times [0,1]$ where $\Sigma = L \cap \overline{U}$. Thus, by restricting the foliated charts $\phi_i$ to $U_i \cap U$ we obtain a family of jointly splitting charts for $\mathcal F$. \end{example}

%
%
%
%

We close this section by defining water slide functions.

\begin{definition}\label{defi:waterslide}
Consider the unit square $[0,1]^m$ foliated by the submanifolds $[0,1]^{m-q} \times \{c\}$, $c\in [0,1]^q$. Let $P \subset [0,1]^m$ be a rectangular neighborhood  and $\pi_i(P) = [0,1]$ for $1\leq i \leq m-q$. A $C^r-$water slide function on $P$ is a $C^r$ diffeomorphism $\varphi:P \rightarrow P$ satisfying:
\begin{itemize}
\item[a)] $\xi$ fixes the first $p$-coordinates;
\item[b)] for any $c \in [0,1]^q$ there exists a unique $d \in [0,1]^q$ such that
\begin{equation} \label{item:a}
\varphi(\{1\} \times [0,1]^{p-1} \times \{c\}) = \{1\} \times [0,1]^{p-1} \times \{d\};
\end{equation}
\item[c)] $\varphi$ is equal to identity in a neighborhood of $(\{0\} \times [0,1]^{m-1}) \cap P$;
\item[d)] \label{condition:c} $D^{(\alpha)}(\varphi (x) - \operatorname{Id}) = 0$ for $|\alpha|\leq r$ and $x\in (\{1\}\times [0,1]^{m-1}) \cap P$.
\end{itemize}
\end{definition}


For the sake of simplicity,  when  ~\eqref{item:a} occurs we say that the water slide function $\varphi$ slides from the height $c$ to the height $d$. Given any two points $a, b\in [0,1]^m$, if ~\eqref{item:a} is true for $c = \pi^q(a)$ and $d=\pi^q(b)$ where $\pi^q :\mathbb R^m \rightarrow \mathbb R^q$ is the projection on the last $q$-coordinates, then we also say that the water slide function $\varphi$ slides from the $a$-height to the $b$-height.

\section{The $C^r$-perturbation procedure} \label{section:cutandglue}

By the definition of the $C^r-$Epstein topology one can make use of the first axiom of Epstein \cite{Epstein} to $C^r-$perturb a foliation on the space $\operatorname{Fol}^q_r(M)$, that is, if we take any $C^r$ diffeomorphism $j$ which is $C^r-$close to $\operatorname{Id}$ then $j(\mathcal F)$ is a $C^r-$foliation which is $C^r-$close to $\mathcal F$. However, this kind of perturbation does not fit our purposes since it essentially preserves all the characteristics of the leaves, in particular it preserves resilient (resp. non resilient) leaves. We now describe how that on a family of splitting charts we can use water slide functions to $C^r-$perturb the given foliation.

From now on let $\mathcal F$ be a codimension-$q$, $q=m-2$, $C^r-$foliation of an $m$-dimensional Riemannian manifold $M$, $1\leq r < \infty$ and let
\begin{itemize}
\item $\mathcal V \subset \operatorname{Fol}^q_r(M)$ be a neighborhood of $\mathcal F$;
\item $\eta= \{\eta_{i}: W_{i} \rightarrow [0,1]^{2}\times [0,1]^q\}_{1\leq i \leq n}$ a family of jointly splitting charts;
\item $W:= \bigcup_{i=1}^{n} W_i$.
\end{itemize}
In what follows we will use a $C^r-$water slide function $\xi:[0,1]^{2}\times [0,1]^{q} \rightarrow [0,1]^{2}\times [0,1]^q$, whose construction depends on $\mathcal V$, to construct a $C^r-$foliation $\mathcal G \subset \mathcal V$ whose charts restricted to $U$ are basically the restricted charts of $f$ perturbed by the water slide function.

In the next proposition we prove that the family of jointly splitting charts can always be reduced to a family of jointly splitting charts with only one or two foliated charts. Moreover, the construction of these charts is such that they are subfoliated by a codimension$-(q+1)$ $C^r-$foliation $\mathcal Q$, whose leaves are fully inside a given plaque chain, and a $q-$dimensional foliation $\tau$ whose leaves are transverse to the plaques of $\mathcal F$.

\begin{proposition} \label{prop:coordinates}
There exists a family of jointly splitting charts $\Phi:=\{\phi_i:U_i \rightarrow [0,1]^m\}_{1\leq i \leq n}$ with $U_i \subset W$,  $n\in \{1,2\}$, and $C^{r}-$foliations $\mathcal Q$ and $\mathcal \tau$ of $U = U_1 \cup U_2$ such that
\begin{itemize}
\item[1)] $\mathcal Q$ is a codimension$-(q+1)$ foliation of $U$ and, for each $u\in U$, $\mathcal Q(u) \subset \mathcal F(u)$;
\item[2)] $\mathcal \tau$ is a one-dimensional foliation of $U$ such that $\tau(u)$ is transverse to $\mathcal F(u)$ for every $u\in U$.
\end{itemize}   
\end{proposition}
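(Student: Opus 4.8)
The plan is to start from the given family $\eta = \{\eta_i : W_i \to [0,1]^2\times[0,1]^q\}$ of jointly splitting charts and the associated diffeomorphism $\mathfrak p : \overline{W} \to \Sigma\times D^q$ flattening $\mathcal F|_{\overline W}$ to the product foliation by slices $\Sigma\times\{t\}$, together with the splitting curve $\gamma\subset\Sigma$. First I would work entirely inside the model $\Sigma\times D^q$ and reduce to a neighborhood of $\gamma$: since $\gamma$ splits $\Sigma$, a small tubular neighborhood $N_\gamma$ of $\gamma$ in $\Sigma$ is a band which, for each $t$, still splits the leaf; and one can arrange $N_\gamma$ to be covered by one or two product charts of the form (interval)$\times$(interval) in $\Sigma$ (one chart if $\gamma$ is an arc, two if it is a loop, which is the reason $n\in\{1,2\}$). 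Pulling these back by $\mathfrak p^{-1}$ and intersecting with the $W_i$ gives the reduced family $\Phi=\{\phi_i : U_i\to[0,1]^m\}$ with $U_i\subset W$ and $U=U_1\cup U_2\subset\overline W$; the splitting property for $\Phi$ is inherited because cutting along the sub-band still disconnects each leaf.

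Next I would define $\mathcal Q$ and $\tau$ directly from the product structure. In the model $\Sigma\times D^q$, let $\mathcal Q$ be the foliation whose leaves are $N_\gamma\times\{t\}$ (two-dimensional, hence codimension $m-2 = q$ inside a leaf... wait) — more precisely, $\mathcal Q$ is chosen so that in each plaque-adapted coordinate $[0,1]^2\times[0,1]^q$ the $\mathcal Q$-leaves are $[0,1]\times\{s\}\times\{c\}$, i.e. codimension $q+1$, lying inside the $\mathcal F$-plaque $[0,1]^2\times\{c\}$; this gives property (1) automatically, and one checks the $\mathcal Q$-leaves are globally contained in a single plaque chain because $U$ was built from only one or two charts arranged end-to-end along $\gamma$. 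For $\tau$ I would take the foliation transverse to $\mathcal F$ given in these coordinates by the lines $\{x\}\times\{y\}\times(\text{arc through }c)$, which after pushing forward by $\mathfrak p^{-1}$ and the charts becomes a one-dimensional $C^r$-foliation of $U$ everywhere transverse to $\mathcal F$; transversality is clear in the model and preserved by the diffeomorphisms, giving property (2).

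The one genuinely delicate point is the compatibility of coordinates in the two-chart case: when $n=2$, the charts $\phi_1,\phi_2$ overlap along an annular region, and I must choose them so that on the overlap the coordinates are compatible not only with $\mathcal F$ but simultaneously with $\mathcal Q$ and $\tau$, and so that the two pieces of the splitting curve match up to a genuine splitting curve of $\Sigma$. The hard part will be arranging these transition maps to preserve all three structures at once while keeping everything $C^r$; concretely, one fixes the product structure $N_\gamma\cong(\text{core}\times\text{fiber})$ first, chooses $\mathcal Q$, $\tau$ and the two charts as pullbacks of standard coordinates on the core and fiber factors, and only then verifies that the charts so obtained are legitimate foliated charts for $\mathcal F$ — i.e. that the ambient regularity of $\mathcal F$ is not disturbed — which follows because all constructions are carried out by composing with the fixed $C^r$-diffeomorphism $\mathfrak p$. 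Everything else (local finiteness is vacuous, the $\phi_i(U_i)$ are rectangular, $U\subset W$) is routine bookkeeping.
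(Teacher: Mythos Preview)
Your proposal is correct and follows the same overall architecture as the paper: split into the arc case ($\gamma(0)\ne\gamma(1)$, one chart) and the loop case ($\gamma(0)=\gamma(1)$, two charts), and build $\mathcal Q$ and $\tau$ from a product structure on a tubular neighbourhood of the splitting curve.

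The execution differs in the loop case. You work globally in the model $\Sigma\times D^q$ via the given diffeomorphism $\mathfrak p$: choose a tubular band $N_\gamma\cong S^1\times[0,1]$, foliate it by circles parallel to $\gamma$ (giving $\mathcal Q$) and by radial arcs times $D^q$ (giving $\tau$), then pull everything back by $\mathfrak p^{-1}$ and cover by two angular rectangles. The paper instead first builds $\widetilde{\mathcal Q}$ and $\tau$ in a single long chart $(\phi_1,U_1)$ covering $\gamma([0,t_0])$ using the chart's own coordinates, and then, rather than invoking $\mathfrak p$, extends these foliations across the remaining arc by an explicit $C^r$ flow $\widetilde\varphi_t$ inside the original chart $W_1$ that carries the ``exit'' transversal $\tau_1=\phi_1^{-1}(\{1\}\times[0,1]^{m-p})$ back to the ``entry'' transversal $\tau_0=\phi_1^{-1}(\{0\}\times[0,1]^{m-p})$, so that the $\mathcal Q$-leaves close up coherently. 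Your route is cleaner and makes the delicate overlap compatibility essentially automatic, since $\mathcal Q$ and $\tau$ are defined once and for all in the product before any charts are chosen; the paper's flow construction is more hands-on and produces the matching of plaques $h_{U_1U_2}(V_1)=V_2$ directly from the flow, which is used in the subsequent perturbation procedure. Either way yields the same conclusion.
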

\begin{proof}

As in Definition \ref{definition:jointly.separating.charts} let $\overline{W}$ be diffeomorphic to $\Sigma \times D^q$ and $\gamma$ to be an splitting curve of $\Sigma$.

\vspace{.2cm}

\noindent \textit{First case:} $\gamma(0) \ne \gamma(1)$.\\
Let $(\tilde{\phi}, U)$ be a foliated chart of $\mathcal F$ such that $\gamma([0,1])\subset U \subsetneq W$. The $C^r-$foliation $Q$ of $U$ given by the leaves $\{\tilde{\phi}^{-1}([0,1]^{p-1}\times \{c\}) \}_{c\in [0,1]^{q+1}}$ is a codimension$-(q+1)$ $C^r-$foliation satisfiying $(1)$. Finally, let $\tau$ be the foliation given by $\{\phi^{-1}(\{d\} \times [0,1]^{q})\}_{d\in [0,1]^p}$. As $\tau$ satisfies $(2)$ the proof is finished for this case.

\vspace{.2cm}

\noindent \textit{Second case:} $\gamma(0)=\gamma(1)$. \\
We can assume $\gamma(0)\in P_1$. Let $t_0\in (0,1]$ be such that $\gamma(t) \in P_1$ for every $t_0\leq t\leq 1$ and let 
$(\phi_1,U_1)$ be a foliated chart of $\mathcal F$ such that $\gamma([0,t_0]) \subset U_1 \subsetneq W$. Since we can take $U_1$ arbitrarily thin, by a change of coordinates if necessary, we can assume that $\phi_1^{-1}(\{0\} \times [0,1]^{m-p}), \tau_1:=\phi_1^{-1}(\{1\} \times [0,1]^{m-p}) \subset W_1$.
Consider the $C^r-$foliation $\widetilde{Q}$ of $U_1$ given by the leaves $\{\phi_1^{-1}([0,1]^{p-1}\times \{c\}) \}_{c\in [0,1]^{q+1}}$. 

Let $\tau_0:= \phi_1^{-1}(\{0\} \times [0,1]^{m-p})$ and $\tau_1:=\phi_1^{-1}(\{1\} \times [0,1]^{m-p})$. Observe that the foliation $\widetilde{Q}$ naturally induces a $C^r-$function $h: \tau_0 \rightarrow \tau_1$, $x \mapsto \widetilde{Q}(x)\cap \tau_1$. Now, we restrict our attention to the foliated chart $(\eta_1,W_1)$. In $W_1$ consider the $C^r-$flow $\widetilde{\varphi}$ given by
\[\widetilde{\varphi}_t(x) := \eta_1^{-1}( t\cdot \eta_1(h^{-1}(x))+ (1-t) \cdot \eta_1(x)). \]
Observe that $\widetilde{\varphi}_1(h(x))=x$ thus $\widetilde{\varphi}_1(\tau_1)=\tau_0$. Similarly, we can define on $\widetilde{W}$ a $C^r-$flow $\tilde{\sigma}$ given by
\[\widetilde{\sigma}_t(x) :=\phi_1^{-1}( (1-t)\cdot \phi_1(x)+ t\cdot \phi_1(h(x)). \]
For $\widetilde{\sigma}$ we have $\widetilde{\sigma}_1(\tau_0)=\tau_1$. Now, by taking smooth bumping functions $\rho_0,\rho_1$ on neighborhoods of $\tau_0$ and $\tau_1$ respectively, with $\rho_0 | \tau_0 = Id$ and $\rho_1| \tau_1 = Id$, we can take a $C^r-$flow $\varphi$ defined on $\widetilde{W} \cup \widetilde{\varphi}([0,1]\times \tau_1)$ such that $\varphi$ is equal to $\widetilde{\varphi}$ out of $\widetilde{W}$ and is equal to $\widetilde{\sigma}$ in $\widetilde{W}$. Finally, we define $\mathcal Q$ as the foliation whose leaves are the sets $\widetilde{Q}(x)\cup \widetilde{\varphi}([0,1]\times (\widetilde{Q}(x) \cap \tau_1))$. The second foliated chart $(\phi_2,U_2)$ is obtained by taking $U_2:= W_2 \cap (U_1 \cup \varphi([0,1]\times \tau_1))$.

Now the foliation $\tau$ is defined in a similar way. We define $\tau(x):= \phi^{-1}(\{d\} \times [0,1]^{q})$ for each $x\in \widetilde{W}$  and for $x\in P_n \setminus \widetilde{W}$ we define $\tau(x) = \varphi_t(\tau_0)$, $0\leq t \leq 1$.

\end{proof}

From now on we will work with the family of jointly splitting charts given by Proposition \ref{prop:coordinates}.

Let $V_i$ denote the set of plaques of $U_i$, $1\leq i \leq 2$. Observe that by the construction made in the proof of Proposition \ref{prop:coordinates} we have
\[h_{U_1U_{2}} ( V_1) = V_{2}, \quad \text{ and } \quad  h_{U_2U_{1}} ( V_2) = V_{1}.\]
Let $V:=V_1 \cup V_2$.  Assume that $\mathcal V$ is given by the family of foliated charts $\{\varphi_i : X_i \rightarrow \mathbb [0,1]^m\}_{i\in \Lambda}$, a locally finite family of compact sets $\{K_{\lambda} \subset X_{\lambda}\}_{\lambda \in \Lambda}$ satisfying properties $(1)-(3)$ from Proposition ~\ref{proposition:epstein} and a set of real numbers $\{\delta_{i}\}_{i\in \Lambda}$. Since $\{K_j\}_{j\in \Lambda}$ is locally finite, for each $p\in \overline{U}$ we can take an open set $K(p) \ni p$ such that $K(p)$ intersects at most a finite number of sets $K_j$. Since $\overline{U}$ is compact, we can cover $\overline{U}$ with a finite number of sets $K(p)$, $p\in \overline{U}$, and consequently only a finite number of sets $K_j$ intersect $\overline{U}$. Let $K_{\lambda_1},\ldots,K_{\lambda_l}$ be such sets. For simplicity we will sometimes denote $K_{\lambda_j}$ by $K_j$, $1\leq i \leq l$. 
Let $\widetilde{K_j}$, $1\leq j \leq l$, the smallest compact set such that $K_j \subset \widetilde{K_j}$ and $\widetilde{K_j}$ cross completely the set $\overline{U}$, that is, for $i\in \{1,2\}$ the set $\phi_i(\overline{V}\cap \widetilde{K_j}) \subset [0,1]^m$, $1\leq j \leq l$, is a rectangular neighborhood with $\pi_1(\phi_i(\overline{V}\cap \widetilde{K_j})) = [0,1]$. Consequently, the intersection of $\phi_i(\overline{V}\cap \widetilde{K_j})$ with the sides $\{0\}^p \times [0,1]^{q}$ and $\{1\}^p \times [0,1]^{q}$ are non-degenerate rectangular neighborhoods.


\vspace{.2cm}

\subsection{The subordinated partition} \label{subsec:subordinated}
For each $1\leq j \leq l$, denote by $\kappa_j$ the ``plaque saturation'' of $K_j \cap V$ inside $V$, that is, \[\kappa_j:= \bigcup_{i,j\in \{1,2\}} h_{U_iU_j}(\{Q: Q \text{ is a plaque of  } V_i \text{ with } Q\cap K_j \ne \emptyset\}).\]
Now consider $\mathcal P$ the joining of the partitions $\{\kappa_j ; V\setminus \kappa_j\}$
\[\mathcal P := \bigwedge_{j=1}^l \{\kappa_j; V\setminus \kappa_j\},\]
that is, $\mathcal P$ is the partition of $V$ by sets of the form $B_1 \cap \ldots \cap B_l$ where, for each $1\leq j \leq l$, $B_j = \kappa_j$ or $B_j = V \setminus \kappa_j$. Observe that we have constructed a finite partition $\mathcal P$ of $V$ with the following property:
\begin{itemize}
\item for each $P\in \mathcal P$, $\overline{\phi_i(P\cap V_i)}$ is a rectangular neighborhood of the form \[\overline{\phi_i(P\cap V_i)}=[0,1]^{p} \times \left( \prod_{i=q}^m[a_i,b_i] \right);\]
\item for $P,Q \in \mathcal P$ with $P\ne Q$, either $\overline{\phi_i(\overline{P}\cap \overline{Q})} = \emptyset$ or $\overline{\phi_i(\overline{P}\cap \overline{Q})} = [0,1]^{p} \times \{A\},$ for some $A\in [0,1]^q$, and is in the boundary of $\phi_i(P\cap Q)$.
\end{itemize}

Let $\mathfrak p: \overline{U} \rightarrow \Sigma \times D^q $ be a $C^r-$diffeomorphism and let $\varepsilon>0$ be small enough so that $\Sigma \setminus O_{\varepsilon}(\gamma([0,1])$ is not a connected set. Consider the foliated charts $(\phi_1,U_1), (\phi_2,U_2)$ constructed in Proposition \ref{prop:coordinates} and denote by $\overline{\phi}_i:\overline{U_i} \rightarrow [0,1]^m$, $i=1,2$, the extension of $\phi_i$ to the boundary of $U_i$. We know that $S_j:= \overline{\phi_1}^{-1}(\{j\}^p\times [0,1]^q) \cup \overline{\phi_2}^{-1}(\{j\}^p\times [0,1]^q)$ is a connected set for $j=0,1$. Thus, given any plaque chain $P_1,P_2$ in $U_1\cup U_2$ we can consider $O_{P_1,P_2}^1$ and $O_{P_1,P_2}^2$ the connected components of  $(P_1\cup P_2) \setminus \mathfrak p^{-1}(O_{\varepsilon}(\gamma([0,1]) \times \{d\})$, where $d= \pi_2 \circ \mathfrak p (x)$ for any $x\in P_1 \cup P_2$, which contain $S_1 \cap (P_1\cup P_2)$ and $S_2 \cap (P_1\cup P_2)$ respectively. Let 
\[O^j:= \bigcup O_{P_1,P_2}^j, \quad j=0,1\]
where the union is over all the plaque chains $P_1,P_2$ in $U_1\cup U_2$.
Observe that by eventually decreasing the set of numbers $\delta_i$ we may assume that for any $1\leq i \leq l$, $K_i$ never intersect $O^1$ and $O^2$ simultaneously.

\subsection{The perturbation} \label{subsection:perturbation}

We proceed to the construction of the perturbation procedure. The first step towards the construction of the perturbation is to perturb the plaques inside $V_1$. After this we will extend the perturbed plaques to perturb plaques on $V_2$, and then to each $K_i$ intersecting $\overline{U}$.

\vspace{.5cm}

\noindent {\underline{First step:}} Perturbing the plaques of $V_1$.

Given the partition $\mathcal P = \{P_1,...,P_s\}$ of $V$ as above. Let $\xi_1,\ldots, \xi_s$ be $C^r-$water slide functions defined on $\phi_1(P_1\cap V_1),...,\phi_1(P_s\cap V_1)$ respectively. In particular, by items $(a)$ and $(b)$ of Definition \ref{defi:waterslide} and by the construction of the foliations $\mathcal Q$ and $\tau$ (see Proposition ~\ref{prop:coordinates}) we have that 
\begin{itemize}
\item $\xi_i(\phi_1(\mathcal Q(x) \cap P_i\cap U_1))$ is fully inside a plaque of $V_i$, for any $x\in V_i$;
\item $\xi(\phi_1(\tau(x))) \subset \tau(x)$, for every $x\in P_i \cap U_1$,
\end{itemize}
for every $1\leq i \leq s$.
 Let $\xi:[0,1]^m \rightarrow [0,1]^m$ be defined by $\xi(x) = \xi_{I(x)}(x)$ where $x\in P_{I(x)}\cap V_1$. Then $\xi$ is a $C^r-$water slide function and $\xi$ can be taken arbitrarily $C^r-$close to identity by taking the $\xi_i$'s arbitrarily $C^r-$close to identity. Now define $\psi_1: V_1 \rightarrow V_1$ by
\[\psi_1 = \phi_1^{-1} \circ \xi \circ \phi_1.\]

\noindent {\underline{Second step:}} Extending the perturbation to the plaques of $V_2$.

Given any plaque $P_2$ of $V_2$, there exists one, and only one, plaque $P_1$ on $V_1$ such that $P_1\cap P_2 \ne \emptyset$. Now, for $p\in P_2$ we take 
\[\psi_2(p):=\mathcal F(\psi_1(\mathcal Q(p) \cap P_1)) \cap \tau(p).\]
Since the foliations $\mathcal Q$ and $\tau$ are $C^r$-foliations of $U$, this defines a $C^r-$function $\psi_2:V_2 \rightarrow V_2$. Also, since $\psi_2  | V_1 \cap V_2 = \psi_1 | V_1 \cap V_2$ we have defined a $C^r-$function $\psi: U \rightarrow U$ such that $\psi  | V_1 := \psi_1$ and $\psi|V_2:=\psi_2$. Also observe that $\psi$ can be taken arbitrarily $C^r-$close to identity by taking $\xi$ sufficiently close to identity.

\vspace{.5cm}

\noindent {\underline{Third step:}} Extending $\psi$ to the family of sets $\{K_i\}_{i\in \Lambda}$.

As we have remarked on the last paragraph of Subsection \ref{subsec:subordinated}, we can assume that the $K_i$'s have the property that each of them cannot simultaneously intersect $O^1$ and $O^2$. Let $1\leq i \leq l$ be fixed.
\begin{itemize}
\item[i)] If $K_i \cap O^1 \ne \emptyset$ we define $\psi_{K_i}:K_i \rightarrow K_i$ as 
\[\psi_{K_i}(p)=p \text{ for } p\in K_i \setminus U \text{ and } \psi_{K_i}(p)= \psi(p) \text{ for } p\in U\cap K_i.\]
\item[ii)] Assume $K_i \cap O^2 \ne \emptyset$. For $p\in K_i \setminus U$ we define $\psi_{K_i}(p)=p$. If $p\in U \cap K_i$ 
consider $\{Q_1(p),Q_2(p)\}$ be the chain of plaques on $U$, with $Q_j(p) \in U_j$, $j=1,2$, such that the water slide function $\xi$ slides from the $\phi_1(Q_1(p))$-height to the $\phi_1(\mathcal Q(p)\cap U_1)$-height. 
Then we define
\[\psi_{K_i}(p) = \psi(\eta(p)),\]
where $\eta: K_i\cap U \rightarrow K_i$ is the function $p \mapsto \tau(p) \cap (Q_1(p) \cup Q_2(p))$. 
\item[iii)] If $K_i \cap U = \emptyset$ we define $\psi_{K_i}=Id$
\item[iv)] If $K_i \subset U$ and $K_i \cap (O^1 \cup O^2) = \emptyset$ then we define $\psi_{K_i}=\psi | K_i$.
\end{itemize}
If $\lambda \in \Lambda$ is such that $\lambda \notin \{\lambda_1,\ldots,\lambda_l\}$ then we define $\psi_{K_{\lambda}}=Id$. By the construction observe that, for each $\lambda \in \Lambda$, $\psi_{K_{\lambda}}$ is arbitrarily $C^r-$close to identity if $\psi$ is chosen sufficiently close to identity. Therefore, if $\xi$ is sufficiently $C^r-$close to identity then each $\psi_{K_{\lambda}}$ is also as $C^r-$close to identity as we want.
 
The last step consists in defining the perturbed codimension-$q$ foliation $\mathcal G$ which is $C^r-$close to the original foliation $\mathcal F$. As it is well known, this can be done by defining a coherent system of local charts, that is, an atlas of local charts $\Omega$ with the property that whenever $P$ and $Q$ are plaques in distinct charts of $\Omega$ then $P\cap Q$ is open in $P$ and open in $Q$. Equivalently, a foliated atlas $\Omega = \{(U_{\alpha},\varphi_{\alpha}): \alpha \in \Lambda\}$ of a codimension-$q$ foliation $\mathcal F$ is coherent if, for any $U_{\alpha} \cap U_{\beta} \ne \emptyset$, the change of coordinates $g_{\alpha \beta} := \varphi_{\alpha} \circ \varphi_{\beta}^{-1} : \varphi_{\beta}(U_{\alpha} \cap U_{\beta}) \rightarrow \varphi_{\alpha}(U_{\alpha} \cap U_{\beta})$ is of the form
\[g_{\alpha \beta }(x,y) = (g_1(x,y), g_2(y))\]
where $x\in \mathbb R^{m-q}$, $y\in \mathbb R^q$, $(x,y) \in \varphi_{\beta}(U_{\alpha} \cap U_{\beta})$ (see \cite[Section $1.2$]{CandelConlonI} for a proof of this equivalence).
%

\vspace{.2cm}

\noindent {\underline{Fourth step:}} 
We finally define a new system of local charts $\omega_{\lambda} : \operatorname{int}(K_{\lambda})\rightarrow (0,1)^m$, $\lambda \in \Lambda$, by taking $\omega_{\lambda} := \varphi_{\lambda} \circ \psi_{K_{\lambda}}^{-1}$, which is a $C^{r}-$function. Observe that the atlas $ \Omega:= \{ (\operatorname{int}(K_{\lambda}),\omega_{\lambda}): \lambda \in \Lambda\}$ is coherent. Indeed, let $\omega_{i},\omega_{j} \in \Omega$, $i,j\in \Lambda$, with $\operatorname{Int}(K_{i}) \cap \operatorname{Int}(K_{j}) \ne \emptyset$.
\begin{itemize}
\item If $\operatorname{Int}(K_{i}) \cap \operatorname{Int}(K_j) \subset U^c$, then $\omega_i \circ \omega_j^{-1} | \operatorname{Int}(K_i) \cap \operatorname{Int}(K_j) = Id$.
%
%

\item If $\operatorname{int}(K_i), \operatorname{int}(K_j) \subset U$ then $\psi_{K_i} = \psi| K_i, \psi_{K_j} = \psi | K_j$ and we have \[\omega_i \circ \omega_j^{-1} | \operatorname{Int}(K_i) \cap \operatorname{Int}(K_j) = \varphi_i \circ \varphi_j^{-1} |\operatorname{Int}(K_i) \cap \operatorname{Int}(K_j).\]

\item If $K_i \cap O^{v} \ne \emptyset$ and $K_i \cap O^v \ne \emptyset$, for $v=1$ or $v=2$, then $\omega_i\circ \omega_j^{-1} | \operatorname{Int}(K_i) \cap \operatorname{Int}(K_j)= \varphi_i \circ \varphi_j^{-1} | \operatorname{Int}(K_i) \cap \operatorname{Int}(K_j)$.

\item If one of the sets $\operatorname{int}(K_i), \operatorname{int}(K_j)$ is inside $U$ and the other is not contained in $U$. We can assume $\operatorname{int}(K_i) \subset U$ and $\operatorname{int}(K_j) \not \subset U$. In this case $\psi_{K_i} = \psi| K_i$ and either $\psi_{K_j} | (K_j \cap U)= \psi | (K_j \cap U)$ if $K_j\cap O^1 \ne \emptyset$ or $\psi_{K_j} |(K_j \cap U) = \psi \circ \eta | (K_j \cap U)$ if $K_j \cap O^2 \ne \emptyset$. If $K_j \cap O^1 \ne \emptyset$ then 
\[\omega_i \circ \omega_j^{-1} | \operatorname{Int}(K_i) \cap \operatorname{Int}(K_j) = \varphi_i \circ \varphi_j^{-1} |\operatorname{Int}(K_i) \cap \operatorname{Int}(K_j).\]
If $K_j \cap O^2 \ne \emptyset$ then
\[\omega_i \circ \omega_j^{-1} | \operatorname{Int}(K_i) \cap \operatorname{Int}(K_j) = \varphi_i \circ \eta \circ \varphi_j^{-1} |\operatorname{Int}(K_i) \cap \operatorname{Int}(K_j).\]

\item If $K_i \cap O^1 \ne \emptyset$, $K_j \cap O^2 \ne \emptyset$ (the case $K_j \cap O^1 \ne \emptyset$, $K_i \cap O^2 \ne \emptyset$ uses the same argument) we have \[\operatorname{Int}(K_i) \cap \operatorname{Int}(K_j) \subset U \setminus (O^1 \cup O^2).\]
By the definitions we have $\omega_i | \operatorname{Int}(K_i) \cap \operatorname{Int}(K_j) = \varphi_i \circ  \psi^{-1} | \operatorname{Int}(K_i) \cap \operatorname{Int}(K_j)$ and $\omega_j | \operatorname{Int}(K_i) \cap \operatorname{Int}(K_j) = \varphi_j \circ \psi_{K_j}^{-1} = \varphi_j \circ \eta^{-1} \circ \psi^{-1}$. Thus
\[\omega_i \circ \omega_j^{-1} | \operatorname{Int}(K_i) \cap \operatorname{Int}(K_j) = \varphi_j \circ \eta \circ \varphi_j^{-1} | \operatorname{Int}(K_i) \cap \operatorname{Int}(K_j) .\]
\end{itemize}

In the first three cases it is trivial that $\omega_i \circ \omega_j^{-1}$ is of the form $(x,y) \mapsto (h_1(x,y), h_2(y))$, $(x,y) \in (0,1)^p\times (0,1)^q$ since $\varphi_i \circ \varphi_j^{-1}$ is of such form. On the fourth and fifth itens observe that the function $\eta$ maps plaques plaques of $K_j$ to plaques of $K_j$ in the sense that, if $p,q\in K_j \cap U$ belong to the same plaque then $\eta(p), \eta(q) \in K_j \cap U$ also belong to the same plaque. In particular $\varphi_j \circ \eta^{-1} \circ \varphi_j^{-1}$ is also of the form $(x,y) \mapsto (h_1(x,y), h_2(y))$, $(x,y) \in (0,1)^p\times (0,1)^q$, which implies that 
\[\omega_i \circ \omega_j^{-1} | \operatorname{Int}(K_i) \cap \operatorname{Int}(K_j) = [\varphi_j \circ \varphi_j^{-1}] \circ [\varphi_j \circ \eta \circ \varphi_j^{-1} ]| \operatorname{Int}(K_i) \cap \operatorname{Int}(K_j) \]
has the desired form.

\begin{lemma}
The $C^r-$foliation $\mathcal G$ given by the coherent $C^r-$atlas $\Omega=\{(\operatorname{int}(K_i), \omega_i) : i\in \Lambda\}$ is in $\mathcal V$.
\end{lemma}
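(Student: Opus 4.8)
The plan is to check directly that $\mathcal G$ lies in $\mathcal V$ against the concrete description of $\mathcal V$ supplied by Proposition \ref{proposition:epstein}. Recall that $\mathcal V$ is the neighborhood of $\mathcal F$ determined by the foliated atlas $\{\varphi_\lambda : X_\lambda \to [0,1]^m\}_{\lambda\in\Lambda}$, the locally finite family of compact rectangular sets $\{K_\lambda\}_{\lambda\in\Lambda}$ with $M=\bigcup_\lambda \operatorname{Int}(K_\lambda)$, and the positive numbers $\{\delta_\lambda\}_{\lambda\in\Lambda}$. So it suffices to produce, for each $\lambda$, a transversal projection $g_\lambda:\operatorname{Int}(K_\lambda)\to\mathbb R^q$ of $\mathcal G$ with $|D^{(\alpha)}(g_\lambda\circ\varphi_\lambda^{-1}-\rho_2)|<\delta_\lambda$ on $\varphi_\lambda(\operatorname{Int}(K_\lambda))$ for all $|\alpha|\le r$. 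Since $\Omega=\{(\operatorname{Int}(K_\lambda),\omega_\lambda)\}_{\lambda\in\Lambda}$ was just shown to be a coherent $C^r$-atlas defining $\mathcal G$, the canonical choice is $g_\lambda:=\rho_2\circ\omega_\lambda=\rho_2\circ\varphi_\lambda\circ\psi_{K_\lambda}^{-1}$, which is defined precisely on $\operatorname{Int}(K_\lambda)$.

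The next step is to reduce the required estimate to $C^r$-smallness of $\psi_{K_\lambda}-\operatorname{Id}$. Writing $F_\lambda:=\varphi_\lambda\circ\psi_{K_\lambda}^{-1}\circ\varphi_\lambda^{-1}$ for the expression of $\psi_{K_\lambda}^{-1}$ in the chart $\varphi_\lambda$, we get $g_\lambda\circ\varphi_\lambda^{-1}=\rho_2\circ F_\lambda$; since $\rho_2$ is linear, the chain rule gives $D^{(\alpha)}(\rho_2\circ F_\lambda-\rho_2)=\rho_2\circ D^{(\alpha)}(F_\lambda-\operatorname{Id})$, hence $|D^{(\alpha)}(g_\lambda\circ\varphi_\lambda^{-1}-\rho_2)|\le|D^{(\alpha)}(F_\lambda-\operatorname{Id})|$ on $\varphi_\lambda(\operatorname{Int}(K_\lambda))$. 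So it is enough to make each $F_\lambda$ as $C^r$-close to $\operatorname{Id}$ on the compact set $\overline{\varphi_\lambda(K_\lambda)}$ as the number $\delta_\lambda$ prescribes.

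Now I would invoke the finiteness already in place. For every $\lambda\notin\{\lambda_1,\dots,\lambda_l\}$ we set $\psi_{K_\lambda}=\operatorname{Id}$ in the third step of the construction, so $F_\lambda=\operatorname{Id}$ and the inequality holds for any $\delta_\lambda>0$. Only the finitely many indices $\lambda_1,\dots,\lambda_l$ — those $K_\lambda$ meeting $\overline U$ — remain; for each of these it was recorded, right before the lemma, that $\psi_{K_{\lambda_j}}$ is arbitrarily $C^r$-close to $\operatorname{Id}$ provided $\psi$, equivalently the water slide function $\xi$, is chosen $C^r$-close enough to $\operatorname{Id}$. Because each $\varphi_{\lambda_j}$ is a fixed $C^r$-diffeomorphism and composition of maps is continuous for the $C^r$-topology on compact sets, $\psi_{K_{\lambda_j}}^{-1}$, and then $F_{\lambda_j}$, are likewise $C^r$-close to $\operatorname{Id}$. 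Choosing $\xi$ close enough to $\operatorname{Id}$ to meet simultaneously the finitely many bounds coming from $\delta_{\lambda_1},\dots,\delta_{\lambda_l}$ then gives $|D^{(\alpha)}(g_\lambda\circ\varphi_\lambda^{-1}-\rho_2)|<\delta_\lambda$ for all $\lambda$ and all $|\alpha|\le r$, and Proposition \ref{proposition:epstein} yields $\mathcal G\in\mathcal V$.

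The only point that genuinely needs care is the propagation of $C^r$-smallness, i.e.\ the assertion that $\psi_{K_{\lambda_j}}\to\operatorname{Id}$ in $C^r$ as $\xi\to\operatorname{Id}$: one must follow this through the second step (the gluing producing $\psi$ from $\psi_1,\psi_2$) and the four cases i)--iv) of the third step, observing that $\psi_1$, $\psi_2$, $\psi$ and the auxiliary map $\eta$ are all obtained from $\xi$ by composition with the fixed $C^r$ data — the charts $\phi_i$ of Proposition \ref{prop:coordinates}, the charts $\eta_i$ of the splitting family, and the flows defining $\mathcal Q$ and $\tau$ — over compact sets, so that $C^r$-smallness of $\xi-\operatorname{Id}$ is preserved at each stage. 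Since this is exactly the observation stated just above the lemma, no new estimate is required, and the proof reduces to the bookkeeping sketched above.
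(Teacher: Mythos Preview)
Your proof is correct and follows essentially the same route as the paper's: both take $g_\lambda=\rho_2\circ\omega_\lambda$, observe that $\omega_\lambda=\varphi_\lambda$ for $\lambda\notin\{\lambda_1,\dots,\lambda_l\}$, and for the remaining finitely many indices use that $\psi_{K_{\lambda_j}}$ (hence $F_{\lambda_j}=\varphi_{\lambda_j}\circ\psi_{K_{\lambda_j}}^{-1}\circ\varphi_{\lambda_j}^{-1}$) is $C^r$-close to the identity once $\xi$ is, then apply Proposition~\ref{proposition:epstein}. Your write-up is a bit more explicit about the linearity of $\rho_2$ and the continuity of composition in $C^r$, but the argument is the same.
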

\begin{proof}
Indeed, for $i\in \Lambda$, $i\notin \{\lambda_1,\ldots,\lambda_l\}$, there is nothing to do since $\omega_i = \varphi_i$. Take $i\in \{\lambda_1,\ldots ,\lambda_l\}$ and take $g_i:\operatorname{Int}(U_i) \rightarrow \mathbb R^q$ a transversal projection of $\mathcal G$, that is, $g_i = \rho_2 \circ \omega_i$. Thus for any multi-index $\alpha$ with $|\alpha| \leq r$ we have
\[|D^{(\alpha)}(\rho_2 \circ \omega_i \circ \varphi_i^{-1}-\rho_2)| = |D^{(\alpha)}(\rho_2 \circ \varphi_i \circ \psi^{-1}_{K_i} \circ \varphi_i^{-1}-\rho_2)|.\]
As remarked along the construction of the functions $\psi_{K_i}$ we can guarantee that $\psi_{K_i}$ is $C^r-$close to identity by taking $\xi$ suficiently $C^r-$close to identity. Therefore, we can take $\xi$ sufficiently $C^r-$close to identity so that 
\begin{align*}
 |D^{(\alpha)}(\varphi_i \circ \psi^{-1}_{K_i} \circ \varphi_i^{-1}-Id)| & < \min\{\delta_{\lambda_1},\ldots, \delta_{\lambda_l}\} \\
 \Rightarrow |D^{(\alpha)}(\rho_2 \circ \omega_i \circ \varphi_i^{-1}-\rho_2)| & <\delta_{i}, \text{ for } i\in \{\lambda_1,\ldots , \lambda_l\},
 \end{align*}
 which implies $\mathcal G \in \mathcal V$ as we wanted to show.
\end{proof}

\begin{remark}
We remark that the $C^r-$perturbation procedure just described does not apply for $r=\infty$, that is, if $r=\infty$ the procedure just described will not necessarily generate a $C^{\infty}-$foliation $\mathcal G$ which is $C^{\infty}-$close to the previously given foliation. Indeed, if $\xi$ is $C^{\infty}-$close to identity then the derivatives of all orders are uniformly bounded and, consequently, $\xi$ is an analytic function. But then, condition $(c)$ from Definition \ref{condition:c} would imply that 
$\xi$ is actually the identity function.
\end{remark}


\section{Creating a resilient leaf} \label{sec:creatingresilient}
In order to prove Theorem ~\ref{theorem:main} we will need the following simple auxiliary Lemma.

\begin{lemma} \label{lemma:auxiliary}
Let $I=[0,1]$ and $\overline{\xi}: I \rightarrow I$ be a $C^r-$function which is $C^r-$close to identity and such that $\overline{\xi}(t_0) = t_0$ for a certain $t_0 \in (0,1)$. Then we can define a $C^r-$water slide function $\xi: [0,1]^{m} = [0,1]^{m-1}\times I \rightarrow  [0,1]^{m}$ which is $C^r-$close to identity and such that
\begin{itemize}
\item[i)] $\pi \circ \xi= \overline{\xi}$, where $\pi:[0,1]^m \rightarrow I$ is the projection on the last coordinate;
\item[ii)] $\pi_j \circ \xi = Id$ where $1\leq j \leq m-1$;
\item[ii)] given $c\in [0,1]^{m-1}$, $a\in I$, $\xi$ slides from the $(c,a)$-height to the $(c,\overline{\xi}(a))$-height;
\item[iii)] $\xi | ([0,1]^{m-1}\times\{t_0\}) = \operatorname{Id}$.
\end{itemize}
\end{lemma}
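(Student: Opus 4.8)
The plan is to construct $\xi$ explicitly as a fiberwise deformation in the last coordinate, using the given $\overline{\xi}$ together with a cutoff that kills the deformation near the left face and localizes the derivatives' vanishing at the right face. Concretely, I would first fix a smooth bump function $\lambda:[0,1]\to[0,1]$ with $\lambda\equiv 0$ on a neighborhood of $0$ (this will give condition (c) of Definition~\ref{defi:waterslide}) and $\lambda\equiv 1$ away from a small neighborhood of $0$; then set, for $(c,s_1,\dots,s_{m-1},a)\in[0,1]^{m-1}\times[0,1]$ written as $(x,a)$ with $x$ the first $m-1$ coordinates,
\[
\xi(x,a) := \bigl(x,\; a + \mu(x)\,(\overline{\xi}(a)-a)\bigr),
\]
where $\mu$ depends only on the coordinate $x_1=\pi_1(x)$ (the one whose range is the full interval $[0,1]$), with $\mu\equiv 0$ near $x_1=0$ and $\mu\equiv 1$ near $x_1=1$, smoothed appropriately. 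The point of having $\mu\equiv 1$ in a neighborhood of $x_1=1$ is that then on $\{1\}\times[0,1]^{m-1}$ the map is exactly $(x,a)\mapsto(x,\overline{\xi}(a))$, and since $\overline{\xi}$ is $C^r$-close to the identity, $D^{(\alpha)}(\xi-\operatorname{Id})$ on that face equals $D^{(\alpha)}(\overline{\xi}-\operatorname{id})$ composed with the projection — but wait, condition (d) of Definition~\ref{defi:waterslide} demands $D^{(\alpha)}(\xi-\operatorname{Id})=0$ on that face for $|\alpha|\le r$, which forces more care.

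To meet condition (d) I would instead take $\overline{\xi}$ itself and first flatten it: replace the naive linear interpolation by requiring $\mu$ to be $C^r$-tangent to $1$ at $x_1=1$ to all orders $\le r$ — i.e. $D^{(k)}\mu(1)=0$ for $1\le k\le r$, $\mu(1)=1$ — AND simultaneously arrange that $\overline{\xi}-\operatorname{id}$ contributes nothing. The cleanest route: note that condition (d) is a statement only about derivatives \emph{in all directions} at points of the right face; since $\xi$ is the identity in the first $m-1$ coordinates, the only nontrivial component is the last one, $a+\mu(x_1)(\overline{\xi}(a)-a)$, and its derivative with respect to $a$ at a face point need not vanish unless we also flatten $\overline{\xi}$ near such points. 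So I would compose: pick $\overline{\xi}$, and then define a genuinely new one-variable map by damping $\overline{\xi}-\operatorname{id}$ near $a=0$ and $a=1$ is not needed; rather I use that the \emph{water slide} only needs (d) on the face $x_1=1$, and there I make $\mu$ flat of order $r$ — but the $a$-derivative of $\mu(1)(\overline{\xi}(a)-a)=\overline{\xi}(a)-a$ is $\overline{\xi}'(a)-1$, generally nonzero. Hence the honest construction must put the flattening on the other end: set $\mu\equiv 1$ near $x_1=0$? No — that clashes with (c). The resolution is that conditions (c) and (d) are compatible only because the perturbation is supported strictly between the two faces: I will take $\mu$ supported in $[\epsilon,1-\epsilon]$ for small $\epsilon$, with $\mu\equiv 1$ on $[2\epsilon,1-2\epsilon]$, so that $\xi=\operatorname{Id}$ in neighborhoods of \emph{both} faces $x_1=0$ and $x_1=1$; then (c) and (d) hold trivially, (i)--(iii) hold on the bulk, and the slide-height statement (ii) of the lemma holds because on the core $\{x_1\ge 2\epsilon\}$ — in particular on $\{1\}\times[0,1]^{p-1}\times\{c\}$ one must re-examine: with $\mu(1)=0$ the right face is unmoved, so the "slide from $c$-height to $d$-height" is read off at an interior slice, which is exactly what Definition~\ref{defi:waterslide} item (b) allows since it only asserts existence of such $d$, and here $d=\overline{\xi}(c)$ once $x_1$ is in the core.

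So the key steps, in order, are: (1) choose $\epsilon>0$ and a $C^\infty$ bump $\mu:[0,1]\to[0,1]$ with $\mu\equiv 0$ near $0$ and near $1$, $\mu\equiv 1$ on $[2\epsilon,1-2\epsilon]$; (2) define $\xi(x_1,\dots,x_{m-1},a)=(x_1,\dots,x_{m-1},\,a+\mu(x_1)(\overline{\xi}(a)-a))$ and check it is a $C^r$ self-map of $[0,1]^m$ — here I use $0\le\mu\le 1$ and $0\le\overline{\xi}\le 1$ to see the last coordinate stays in $[0,1]$; (3) verify it is a diffeomorphism: for fixed $x_1$ the map $a\mapsto a+\mu(x_1)(\overline{\xi}(a)-a)=(1-\mu(x_1))a+\mu(x_1)\overline{\xi}(a)$ is a convex combination of $\operatorname{id}$ and $\overline{\xi}$, hence has derivative $(1-\mu)+\mu\overline{\xi}'$, which is close to $1$ (thus positive) because $\overline{\xi}$ is $C^r$-close to the identity, so it is a $C^r$-diffeomorphism of $[0,1]$, giving invertibility of $\xi$; (4) read off (i), (ii), (iii) of the lemma directly from the formula, using $\overline{\xi}(t_0)=t_0$ for the last one; (5) verify the water-slide axioms (a)--(d) of Definition~\ref{defi:waterslide} — (a),(b) from the product form and convex-combination structure, (c),(d) from $\mu$ vanishing near both faces; (6) estimate $\|\xi-\operatorname{Id}\|_{C^r}\le C(\mu)\,\|\overline{\xi}-\operatorname{id}\|_{C^r}$ by the product rule, so $\xi$ is $C^r$-close to identity whenever $\overline{\xi}$ is. The main obstacle, and the point deserving the most care, is reconciling water-slide conditions (c) and (d) with the nontriviality of the slide: this is handled precisely by forcing $\mu$ to vanish on neighborhoods of \emph{both} endpoints and noting that Definition~\ref{defi:waterslide}(b) only requires the existence of a target height, not that it be witnessed literally on the face — equivalently, one works with the convex-interpolation structure so that all the slide action happens in the open core $x_1\in(0,1)$ while both faces remain frozen.
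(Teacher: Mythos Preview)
There is a genuine gap. Your final construction takes $\mu$ supported in $[\epsilon,1-\epsilon]$, so $\mu(1)=0$ and $\xi$ is the identity on the entire right face $\{1\}\times[0,1]^{m-1}$. But the ``slide from height $c$ to height $d$'' is, by Definition~\ref{defi:waterslide}(b) and the paragraph following it, read off \emph{at the right face}: one looks at $\varphi(\{1\}\times[0,1]^{p-1}\times\{c\})$. With your $\xi$ this image is $\{1\}\times[0,1]^{p-1}\times\{c\}$, so $\xi$ slides from $(c,a)$-height to $(c,a)$-height, not to $(c,\overline{\xi}(a))$-height. Hence item~(ii) of the lemma fails, and item~(i) fails as well (at points with $x_1$ close to $1$ you get $\pi\circ\xi(x,a)=a$, not $\overline{\xi}(a)$). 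Your claim that the slide ``is read off at an interior slice'' is a misreading of the definition.

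The paper's construction does the opposite of what you propose: it takes a bump $L$ with $L\equiv 1$ near $t=0$ and $L\equiv 0$ for $t\geq t_1$, and sets $\xi_0(t,x)=(t,\,L(t)x+(1-L(t))\overline{\xi}(x))$; so near the right face the map is exactly $(t,\overline{\xi}(x))$, giving the required nontrivial slide. Your concern about condition~(d) is legitimate---taken literally with $|\alpha|=0$ it would force $\xi=\operatorname{Id}$ on the right face, which is incompatible with any nontrivial slide---but the paper's own proof, and the way the water slide is glued in Section~\ref{section:cutandglue}, show that what is actually needed is that $\xi$ be \emph{constant in the first variable} near the right face (so the perturbation extends by a rigid shift of plaques), and this the paper's $\xi_0$ does satisfy since $L\equiv 0$ there. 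The correct route is therefore not to freeze the right face but to interpolate all the way to $\overline{\xi}$ on it; the convex-combination estimate you give in step~(3) and the $C^r$-bound in step~(6) then go through unchanged.
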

\begin{proof}
Let $L:[-2,2] \rightarrow [-2,2]$ be a smooth function satisfying
\begin{itemize}
\item for $t$ in a neighborhood of $0$ we have $L(t) = 1$;
\item there exists $t_1 < 2$ such that $L(t)=0$ for every $t_1 \leq t \leq 2$.
\end{itemize}
Let $\xi_0:[0,1]^2 \rightarrow [0,1]^2$ be defined by 
\[\xi_0(t,x) = (t, L(t)x+(1-L(t))\overline{\xi}(x)).\] 
Since $L$ is a smooth function, the $j$-th derivative of $L$ is uniformly bounded. Then, by taking $\overline{\xi}$ close enough to $Id$ on the $C^r-$topology we have that $\xi_0$ is $C^r-$close to $Id$. Note that  $\xi_0(t,t_0) = (t,L(t)t_0+(1-L(t))\overline{\xi}(t_0)) = (t,t_0)$ for every $t\in [0,1]$. Finally, the desired water slide function is given by $\xi: [0,1]^{m} = [0,1]^{m-2} \times [0,1] \times I \rightarrow  [0,1]^{m}$
\[\xi ( c, t, x) = (c, \xi_0(t,x)).\]
As $\xi_0$ is as close as we want to $Id$ on the $C^r-$topology, then $\xi$ can be taken to be arbitrarily $C^r-$close to $Id$.
\end{proof}

\begin{proof}[Proof of Theorem ~\ref{theorem:main}]

Let $\mathcal V$ be any $C^r-$neighborhood of $\mathcal F$. Let $\Phi= \{\phi_i:B_i \rightarrow \mathbb R^m\}_{i\in \Lambda}$ be a family of jointly splitting charts. 
By Proposition \ref{prop:coordinates} we can assume that this system has only two charts $(\phi_1,U_1)$ and $(\phi_2,U_2)$. We can assume without loss of generality that $\gamma([0,1]) \cap U_1 \ne \emptyset$. Now, as done in Section~\ref{section:cutandglue}, call $\mathcal P = \{ P_1,...,P_s\}$ the partition of $U_1 \cup U_2$ obtained by taking the intersections of $U_1 \cup U_2$ with the locally finite family of compact sets $\{K_i\}$ describing by $\mathcal V$.

Take $a\in \gamma([0,1]) \cap U_1$. Again there is no loss of generality in assuming that $a=\gamma(0)$. By increasing the compact sets $\{K_i\}$ if necessary we can assume that for a certain $1\leq w \leq s$ we have $a\in \operatorname{Int}(P_w)$. Let $\mathcal T_a$ be a transversal passing through $a$, that is, $a\in \mathcal T_a:= \phi_1^{-1}([0,1]^{m-1} \times \{\mathfrak c\}) \subset U_1$ for some $\mathfrak c\in [0,1]$. By the definition of the partition $\mathcal P$ we know that $\overline{\phi_1(P_w \cap U_1)}$ is a rectangular neighborhood in $[0,1]^m$, say
\[\overline{\phi_1(P_w\cap U_1)} = \prod_{i=1}^m [\alpha_i, \beta_i], \quad 1\leq \alpha_i<\beta_i \leq 1, \text{ for } 1\leq i \leq m.\]
Let $\mathfrak R: \overline{\phi_1(P_w \cap U_1)} \rightarrow [0,1]^m$ be given by $\mathfrak R (x_1,...,x_m) = \left( \frac{\alpha_1-x_1}{\alpha_1-\beta_1}, \ldots,  \frac{\alpha_m-x_m}{\alpha_m-\beta_m}\right)$ and denote $\mathfrak j: \overline{\phi_1(\mathcal T_a \cap P_w)} \rightarrow [0,1]$ the function $\mathfrak j:= \pi_m \circ \mathfrak R$. On $\overline{\phi_1(\mathcal T_a \cap P_w)}$ we take $\rho$ the distance obtained by pulling back through $\mathfrak j$ the standard distance on $[0,1]$. Given two points $c,d \in \overline{\phi_1(\mathcal T_a \cap P_w)}$ we will say that $c$ is higher $d$ if
\[\rho(c,\phi_1(a)) = \rho(c,d)+\rho(d,\phi_1(a)),\]
and we say that a point $x\in \mathcal T_a \cap P_w$ is between two points $z,w \in \mathcal \mathcal T_a \cap P_w$ if the point $\mathfrak j(\phi_1(x))$ is inside the interval with extremes $\mathfrak j(\phi_1(z))$ and $\mathfrak j(\phi_1(w))$. To simplify the notation we will also denote $\max (x,y) := \max (\mathfrak j (\phi_1(x)), \mathfrak j (\phi_1(y)))$, $x,y\in \mathcal T_a \cap P_w$.

Let $f$ be the holonomy along the loop $\gamma$, that is, $f=h_{\gamma} : D \rightarrow \mathcal T$ where $D\subset \mathcal T$ is an open set on the complete transversal $\mathcal T$. The holonomy $f$ naturally induces a function $f_a:D(a) \rightarrow \mathcal T_a$, from an open set $D_a \subset \mathcal T_a$ with $\gamma(0)=a\in D_a$ to $\mathcal T_a$, by taking
\[f([x]) =[f_a(x)] , \quad x\in \mathcal T_a,\]
where $[x] \in \mathcal T$ denotes the element in $\mathcal T$ given by the plaque of $x$. More precisely, to define $[x]$ we are implicitly using the fact that a complete transversal $\mathcal T$ is a disjoint union of spaces of plaques $T_U:=U /(\mathcal F | U)$ of the foliated charts of $\mathcal F$ (where $U$ varies among the domains of the foliated charts). Therefore a point in $\mathcal T$ is identified uniquely with a plaque of the atlas (see \cite[Definition $1.3.4$]{Walczak}).
%
%

Denote $J=\mathcal T_a \cap \operatorname{Int}(P_w)$. We construct a sequence of points $(b_n)_n$ in $J$ as follows. Take $b_1 \ne a$ be a point in $J$ and close enough to $a$ so that $a_1:=f(b_1) \in J$. Now, by the continuity of $f$ we can choose $b_2$ between $a$ and $\max\{b_1,a_1\}$ such that $a_2:=f(b_2)$ is still between $a_1$ and $a$. Having chosen $b_1,b_2,\ldots, b_{n-1}$, again by continuity of $f$ we can pick $b_n$ between $a$ and $\max \{ b_{n-1},a_{n-1}\}$ such that $a_n:=f(b_n)$ is still between $a_{n-1}$ and $a$.


Denote, $a':=\mathfrak j (\phi_1(a))$, $b_n':=\mathfrak j (\phi_1(b_n))$, $a_n':=\mathfrak j (\phi_1(a_n))$.
 Let $g:=\max(a_1',b_1')$ and take $q_2 \in [0,1]$ such that $<q_2$. Consider the function $\overline{\xi}_0: [0,1] \rightarrow [0,1]$ given by:
\begin{itemize}
\item for each $i$ we define $\overline{\xi}_0(b_i'):= a_i'$;
\item $\overline{\xi}_0(x) = x$ for every $x\in [0,1]$ with $x\leq a'$ or $x\geq q_2$;
\item $\overline{\xi}_0$ is linear on the intervals $[g,q_2]$ and $[b_i',b_{i+1}']$, $i\geq 1$.
\end{itemize}
Applying Lemma ~\ref{lemma:auxiliary} to $\overline{\xi}$, we can define a $C^r-$water slide function $\tilde{\xi}:[0,1]^m \rightarrow [0,1]^m$ such that $\tilde{\xi}$ slides from the $(c,b_i')$-height to the $(c,a_i')$-height and $\tilde{\xi}([0,1]^{m-1}\times \{a'\})=\operatorname{Id}$. Finally, we define a $C^r-$water slide function $\xi : \phi_1(P_w \cap U_1) \rightarrow \phi_1(P_w\cap U_1)$ by taking $\xi:= \mathcal R^{-1} \circ \tilde{\xi} \circ \mathcal R$.

Furthermore, by taking $b_1$ close enough to $a$ we have that $\overline{\xi}$ is $C^r-$close to identity and consequently $\xi$ is as $C^r-$close to identity as we want. Let $\xi_j : \phi_1(P_j \cap U_1) \rightarrow \phi_1(P_j \cap U_1)$ to be the identity function for all $j1\leq j \leq s, j\ne w$ and define
\[\xi(x):= \xi_{I(x)}(x),\]
where $x\in P_{I(x)} \cap U_1$.

%
%

Applying the $C^r-$perturbation procedure described in Section ~\ref{section:cutandglue}, using the function $\xi$ constructed above, we obtain a $C^r-$foliation $\mathcal G \in \mathcal V$. Let $g$ be the holonomy induced by $f$ after the perturbation procedure. More precisely, by item (i) from the statement of the Theorem \ref{theorem:main}, in a sufficiently small neighborhood $\mathcal U'\subset \mathcal T$ of $a$ we can take the holonomy $g : \mathcal U'\subset \mathcal T \rightarrow \mathcal T$ given by 
\[g([x])= [\psi \circ f_a(x)], \quad \text{ for all } x\in D_a,\]
where $\psi:U \rightarrow U$ is defined following the construction of Subsection \ref{subsection:perturbation} and $U' = \{[x] : x\in \mathcal T_a\}$.
Then, 
\[g([b_n]) = [\psi \circ f_a (b_n)] = [\phi^{-1} \circ \xi( \phi (a_n))] =  [\phi^{-1} (\phi(b_{n+1}))] = [b_{n+1}], \quad \forall  n.\]
In particular $b_1 \ne a$ and $g^n([b_1]) \rightarrow [a]$, showing that the leaf of $\mathcal G$ which contains $a$ is resilient. 
\end{proof}

\begin{proof}[Proof of Corollary ~\ref{coro}]
Apply Theorem \ref{theorem:main} to the foliation given in Example \ref{example1}.
\end{proof}

\section{Final considerations and problems}

We say that a foliation $\mathcal F \in \operatorname{Fol}^q_r(M)$, where $(M,g)$ is a $C^r-$Riemannian manifold, has $C^r-$robustly zero geometric entropy if there exists a neighborhood $\mathcal V \subset \operatorname{Fol}^q_r(M)$ of $\mathcal F$ in the $C^r-$Epstein topology such that for every $\mathcal G \in \mathcal V$ we have $h_g(\mathcal G)=0$.

If we assume $M$ to be a compact smooth Riemannian manifold, our results shows that under the hypothesis of Theorem ~\ref{theorem:main} one may use recurrence to approximate a given foliation $\mathcal F$ by foliations with positive geometric entropy. Then, Theorem ~\ref{theorem:main} provides an obstruction for a foliation to have $C^r-$robustly zero geometric entropy. A natural question that rises in this context, and which was previously addressed by S. Hurder in the $C^1$ context \cite[Problem $9.11$]{HurderSurvey}), is:

\begin{problem}
Give examples (if there are any) of $C^r-$foliations $\mathcal F \in  \operatorname{Fol}^q_r(M)$, $r\geq 1$, of a compact Riemannian manifold $M$ which have $C^r-$robustly zero geometric entropy.
\end{problem}

As remarked in the introduction, it was proved by S. Crovisier \cite{Crovisier} that the set of $C^1-$diffeomorphisms which have robustly zero topological entropy is exactly the closure of the set of Morse-Smale diffeomorphisms. Can we obtain a similar classification for foliations?

\begin{problem}
Classify the set of all foliations $\mathcal F \in \operatorname{Fol}^q_1(M)$ with $C^1-$stably zero entropy where $M$ is a compact Riemannian manifold.
\end{problem}

\section*{Acknowledgement}
The author expresses his hearty gratitude to the anonymous referee who helped to improve considerably the redaction of the paper. The author also thanks R\'egis Var\~ao for useful conversations on the subject. The author had the financial support of FAPESP process \# 2016/05384-0 and FAEPEX process \#2334/16.

\bibliographystyle{plain}
\bibliography{Referencias}

\begin{thebibliography}{10}

\bibitem{Bowen71}
R.~Bowen.
\newblock Entropy for group endomorphisms and homogeneous spaces.
\newblock {\em Trans. Amer. Math. Soc.}, 153:401--414, 1971.

\bibitem{BurnsWeiss}
K.~Burns and H.~Weiss.
\newblock A geometric criterion for positive topological entropy.
\newblock {\em Comm. Math. Phys.}, 172:95--118, 1995.

\bibitem{Camacho}
C\'esar Camacho and Alcides Lins~Neto.
\newblock {\em Geometric theory of foliations}.
\newblock Birkh\"auser Boston, Inc., Boston, MA, 1985.

\bibitem{CandelConlonI}
A.~Candel and L.~Conlon.
\newblock {\em Foliations {I}}, volume~23 of {\em Graduate Studies in
  Mathematics}.
\newblock American Mathematical Society, Providence, RI, 2000.

\bibitem{Crovisier}
S.~Crovisier.
\newblock Birth of homoclinic intersections: a model for the central dynamics
  of partially hyperbolic systems.
\newblock {\em Ann. of Math. (2)}, 172(3):1641--1677, 2010.

\bibitem{Epstein}
D.~B.~A. Epstein.
\newblock A topology for the space of foliations.
\newblock In {\em Geometry and topology ({P}roc. {III} {L}atin {A}mer.{S}chool
  of {M}ath., {I}nst. {M}at. {P}ura {A}plicada {CNP}q, {R}io de {J}aneiro,
  1976)}, pages 132--150. Lecture Notes in Math., Vol. 597. Springer, Berlin,
  1977.

\bibitem{EMT}
D.~B.~A. Epstein, K.~C. Millet, and D.~Tischler.
\newblock Leaves without holonomy.
\newblock {\em Journal of the London Mathematical Society}, 16:548--552, 1977.

\bibitem{GLW}
\'E. Ghys, R.~Langevin, and P.~Walczak.
\newblock Entropie geometrique des feuilletages.
\newblock {\em Acta Math.}, 168:105--142, 1988.

\bibitem{Hector77}
G.~Hector.
\newblock {\em Feuilletages en cylindres}, pages 252--270, Lecture Notes in
  Math., Vol. 597.
\newblock Number 597 in Geometry and topology ({P}roc. {III} {L}atin {A}mer.
  {S}chool of {M}ath., {I}nst. {M}at. {P}ura {A}plicada {CNP}q, {R}io de
  {J}aneiro, 1976). Springer, Berlin, 1977.

\bibitem{Hurder2010}
S.~Hurder.
\newblock Entropy and {D}ynamics of ${C}^1$ {F}oliations.
\newblock preprint, available from
  http://homepages.math.uic.edu/~hurder/publications.html, 2000.

\bibitem{HurderSurvey}
S.~Hurder.
\newblock Dynamics and the {G}odbillon-{V}ey class: a history and survey.
\newblock In {\em Foliations: geometry and dynamics ({W}arsaw, 2000)}, pages
  29--60. World Sci. Publ., River Edge, NJ, 2002.

\bibitem{Hurder2009}
S.~Hurder.
\newblock Classifying foliations.
\newblock In {\em Foliations, geometry, and topology}, volume 498 of {\em
  Contemp. Math.}, pages 1--65. Amer. Math. Soc., Providence, RI, 2009.

\bibitem{Smale}
S.~Smale.
\newblock Differentiable dynamical systems.
\newblock {\em Bull. Amer. Math. Soc.}, 73:747--817, 1967.

\bibitem{Walczak}
Pawe{\l} Walczak.
\newblock {\em Dynamics of foliations, groups and pseudogroups}.
\newblock Instytut Matematyczny Polskiej Akademii Nauk. Monografie Matematyczne
  (New Series) [Mathematics Institute of the Polish Academy of Sciences.
  Mathematical Monographs (New Series)]. Birkh\"auser Verlag, Basel, 2004.

\end{thebibliography}

\end{document}